\numberwithin{equation}{section}
\newtheorem{theorem}{Theorem}
\newtheorem{proposition}{Proposition}
\newtheorem{lemma}{Lemma}
\newtheorem{corollary}{Corollary}[proposition]
\newtheorem{remark}{Remark}
\begin{document}

\begin{frontmatter}

\title{CDF of non--central $\chi^2$ distribution revisited. Incomplete hypergeometric type 
functions approach}

\author[A2]{Dragana Jankov Ma\v sirevi\'c} 
\address[A2]{Department of Mathematics, University of Osijek, 31000 Osijek, Croatia}
\ead{djankov@mathos.hr}

\author[A3,B1]{Tibor K. Pog\'any \corref{cor1}}
\address[A3]{Faculty of Maritime Studies, University of Rijeka, 51000 Rijeka, Croatia}
\address[B1]{Institute of Applied Mathematics, \'Obuda University, 1034 Budapest, Hungary}
\ead{poganj@pfri.hr}
\cortext[cor1]{Corresponding author}

\begin{abstract}
The cumulative distribution function of the non--central chi-square distribution 
$\chi_\nu'^2(\lambda),\, \nu\in\mathbb{R}^+$ possesses an integral representation 
in terms of a generalized Marcum $Q$--function. Regarding some already known--results, here we derive 
a simpler form of the cumulative distribution function for $\nu = 2n \in\mathbb{N}$ degrees of 
freedom. Also, we express these representations in terms of an incomplete Fox--Wright function 
${}_p\Psi_q^{(\gamma)}$ and the generalized incomplete hypergeometric functions concerning the 
important special cases as ${}_1\Gamma_1,\, {}_2\Gamma_1$ and ${}_2\gamma_1$. New identities are 
established between ${}_1\Gamma_1$ and ${}_2\Gamma_1$ as well.
\end{abstract}

\begin{keyword}
CDF of non--central $\chi^2$ distribution \sep Modified Bessel function of the first kind \sep 
Marcum $Q$--function \sep Incomplete gamma functions \sep (In)complete Fox--Wright function \sep
Generalized (in)complete hypergeometric functions.
\medskip 

\MSC[2010] Primary: 40H05, 60E05; Secondary: 33C10, 62E10.
\end{keyword} 
\end{frontmatter} 

\allowdisplaybreaks 

\section{Introduction  and motivation}

If $X_1,X_2,\dots X_\nu$ are independent homoscedastic normal $\mathscr N(\mu_j, \sigma^2)$,
$\mu_j\in \mathbb R,\,j=\overline{1, \nu}$, $\sigma>0$ random variables (rv) defined on a
standard probability space $(\Omega, \mathfrak F, \mathsf P)$, then the rv
$\xi = X_1^2+ \dots + X_\nu^2$ has non--central $\chi^2$--distribution with
$\nu \in \mathbb N$ degrees of freedom (the size number of the sum) and with the non--centrality
parameter $\lambda = \mu_1^2 + \cdots+\mu_\nu^2 \geq 0$. The distribution of the rv $\xi$
is usually denoted by  $\chi_\nu'^2(\lambda)$ \cite[p. 433]{Johnson} and the appropriate probability
density function (PDF) can be expressed as \cite{Fisher, Johnson}
   \begin{equation} \label{00}
	    f_{\nu,\lambda}(x) = \dfrac12\, {\rm e}^{-\frac{x+\lambda}2}
	                         \left(\dfrac{x}{\lambda}\right)^{\frac{\nu-2}4}
	                         \,I_{\frac\nu2-1}(\sqrt{\lambda x}), \qquad x>0,
   \end{equation}
where $I_\eta$ denotes the modified Bessel function of the first kind of the order $\eta$, which 
has the power series definition \cite[p. 249, Eq. {\bf 10.25.2}]{NIST}
   \begin{equation} \label{Star}
	    I_\eta(z) = \sum_{k \geq 0} \dfrac1{\Gamma(\eta+k+1)\, k!}\,
			           \Big(\dfrac{z}2\Big)^{2k+\eta}, \qquad \Re(\eta)>-1,\, z \in \mathbb C\,.
	 \end{equation}
The non--central $\chi^2$--distribution is, indeed, one of the most applied distributions, having
application in some statistical tests \cite{Kamel}, in finance, estimation and decision theory, time series
analysis \cite{Arpad0, Scharf}, in mathematical physics \cite[p. 435]{Johnson} and among others in
communication theory in which case the appropriate cumulative distribution  function (CDF) is given
by \cite[p. 66, Eq. (1.1)]{DJM_MJOM}
   \begin{equation}\label{distribution}
      F_{\nu,\lambda}(x) = 1-Q_{\frac\nu2}(\sqrt{\lambda},\sqrt{x}),\qquad x>0,
   \end{equation}
where $Q_\mu$ stands for the generalized Marcum $Q$--function \cite{Arpad1}
   \begin{equation}\label{M1}
      Q_\mu(a,b) = \dfrac1{a^{\mu-1}}\int_b^\infty t^\mu {\rm e}^{-\frac{t^2+a^2}{2}}\,
			             I_{\mu-1}(at)\,{\rm d}t, \qquad a,\mu>0,\,b \geq 0;
   \end{equation}
in this case the non-centrality parameter $\lambda$ is interpreted as a signal--to--noise
ratio \cite[p. 435]{Johnson}.

Although, in general, $\nu$ can be a nonnegative real number \cite{Robert}, considering, obviously, only
the PDF \eqref{00}, without bearing in mind the degrees of freedom ancestry and the structure of the rv
$\xi \sim \chi_\nu'^2(\lambda)$, most of the authors have dealt with PDF and CDF in the case when
$\nu = n \in \mathbb N$ (see e.g. \cite{Johnson, Patnaik, Pearson, Sankaran, Temme}). Quite recently,
Brychkov derive a closed--form expression for the generalized Marcum $Q$--function
\cite[p. 178, Eq. (7)]{Brychkov} in terms of the complementary error function
${\rm erfc}(z)$  \cite[p. 160, Eq. {\bf{7.2.2}}]{NIST} which immediately implies a new formula
for CDF \eqref{distribution} in the case when $n\in\mathbb{N}$ is odd; in the case of even number of
the degrees of freedom Jankov Ma\v sirevi\' c derived \cite{DJM_MJOM} a novel expression for the
appropriate CDF \eqref{distribution} in terms of the modified Bessel function of the second kind
$K_\nu$ and its incomplete variant $K_\nu(z,w)$, see \cite[p. 26, Eq. (1.30)]{Agrest}, where,
for $\Re(z)>0$, $K_\nu(z,w)\to K_\nu(z)$, as $w \to \infty$, in pointwise sense. It is also worth
to mention that Jankov Ma\v sirevi\'c established the computational efficiency (compare
\cite[Section 3]{DJM_MJOM}) of hers formulae versus the relations by Temme for even $n \in \mathbb N$,
which ones, rewritten in our setting read \cite[p. 58, Eq. (2.8)]{Temme}
   \begin{equation}\label{CDF_Temme}
      F_{n,\lambda}(x) = \left\{\begin{array}{ccl}
         1 - \dfrac12\left(\dfrac x\lambda\right)^{\frac{n}4} \left[T_{\frac{n}2-1}(\sqrt{\lambda x},\omega)
				   - \sqrt{\dfrac{\lambda}{x}}\,T_{\frac{n}2}(\sqrt{\lambda x},\omega)\right], &x >\lambda\\
             \dfrac12\left(\dfrac x\lambda\right)^{\frac{n}4}\left[\sqrt{\dfrac{\lambda}{x}}\,
						 T_{\frac{n}2}(\sqrt{\lambda x},\omega) - T_{\frac{n}2-1}(\sqrt{\lambda x},\omega)\right],
						& x< \lambda
        \end{array}\right.;
   \end{equation}
here $\omega = (x+\lambda)\,(2\sqrt{\lambda x}\,)^{-1} -1$, while
   \[T_\nu(\sqrt{\lambda x},\omega) = \int_{\sqrt{\lambda x}}^\infty
	                                    {\rm e}^{-(\omega+1)t}I_\nu(t)\,{\rm d}t\,. \]
Temme claimed that his formulae have certain computational advantages.\footnote{We
mention that the formulae \eqref{CDF_Temme} are also listed in the book Johnson {\it et al.} \cite[p. 441,
Eq. (29.20)]{Johnson}, but unfortunately in an erroneous form.}

Introducing the function
   \begin{equation} \label{functionS}
	    S_\nu(\sqrt{\lambda x},\omega) = \int_0^{\sqrt{\lambda x}} {\rm e}^{-(\omega+1)t}I_\nu(t)\,{\rm d}t,
	 \end{equation}
with the help of the Laplace transform of the modified Bessel function
\cite[p. 313, Eq. 2.15.3.1]{Prudnikov2} we conclude
   \[ S_\nu(\sqrt{\lambda x},\omega)+T_\nu(\sqrt{\lambda x},\omega) =
	          \dfrac{(x+\lambda-|x-\lambda|)^\nu}{\big(2 \sqrt{\lambda x}\,\big)^{\nu-1} |x-\lambda|},
						\qquad \nu>-1,\, \min\{x, \lambda\}>0, \]
it follows, by the corresponding considerations in \cite{DJM_MJOM}, that \eqref{CDF_Temme} can be
written in the modified symmetric form also for all $n \in \mathbb N$ and $\min\{x, \lambda\}>0$: 
  \begin{equation} \label{CDF_Temme2}
      F_{n,\lambda}(x) = \dfrac12\left(\dfrac x\lambda\right)^{\frac{n}4}
				     \Big\{ S_{\frac{n}2-1}(\sqrt{\lambda x},\omega) -
						 \sqrt{\dfrac{\lambda}{x}}\, S_{\frac{n}2}(\sqrt{\lambda x},\omega) \Big\},
	\end{equation}
which is a more convenient representation for numerical calculations. In turn, for
$x = \lambda$ this expression reduces to a difference of two $S_\nu(\lambda, 0)$ integrals 
which are generalized hypergeometric ${}_2F_2$ functions, {\it viz.}
   \begin{align} \label{CDF=}
	    F_{n,\lambda}(\lambda) &= \dfrac1{\Gamma(\frac{n}2+1)} \Big(\dfrac{\lambda}2\Big)^{\frac{n}2}\,
						 \Big\{ {}_2F_2 \Big[ \begin{array}{c} \frac{n-1}2, \frac{n}2\\ \frac{n}2+1, n-1 \end{array}
						 \Big| - 2 \lambda \Big] \notag \\
					&\qquad - \dfrac{\lambda}{n+2}\, {}_2F_2 \Big[ \begin{array}{c} \frac{n+1}2,
						 \frac{n}2+1\\ \frac{n}2+2, n+1 \end{array} \Big| - 2 \lambda \Big] \Big\}\,.
	 \end{align}
One of the main aims of this paper is to derive another elegant expression for $F_{2n,\lambda}$ related
to \eqref{CDF_Temme2}. That result is presented in the next section. In the Section 3 we show that this
new CDF formula can be explicitly expressed in terms of the incomplete Fox--Wright function
${}_p\Psi_q^{(\gamma)}$. The Section 4 consists from expressions for CDF which are established in
terms of the incomplete confluent ${}_1\Gamma_1$ and Gaussian hypergeometric function ${}_2\Gamma_1$.
Some new identities between ${}_1\Gamma_1, {}_2\Gamma_1$ and ${}_2\gamma_1$ end this part. 
The exposition closes the fifth section with a discussion and further related remarks.

\section{On CDF of $\chi_{2n}'^2(\lambda)$--distribution regarding Temme's result} \label{sec1}

In this section we will show that $F_{2n,\lambda}$ can be presented in simple form, containing only
modified Bessel functions $I_n$, $n\in\mathbb N_0$ and the function $S_\nu$ of the order $\nu=0$.
The main tool we refer to is a formula by Jankov Ma\v sirevi\' c which consists Theorem~\ref{pomoc}
exposed in a slightly different, but more condensed way then in \cite{DJM_MJOM}.

\begin{theorem} {\rm \cite[p. 4, Theorem 2.1]{DJM_MJOM}} \label{pomoc}
The {\rm CDF} of the non--central chi-square distribution with an even number of the degrees of freedom can
be represented in the form
   \begin{equation}\label{DJM1001}
      F_{2n,\lambda}(x) = {\rm e}^{-\frac{\lambda+x}{2}}\, \sum_{k \geq n}
			  \left(\dfrac x\lambda\right)^{\frac{k}2} \, I_k(\sqrt{\lambda x}),
   \end{equation}
where $n\in\mathbb{N}$ and $\min\{\lambda,x\}>0$.
\end{theorem}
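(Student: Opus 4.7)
The route I would take goes through \eqref{distribution}: it suffices to verify, for every $n \in \mathbb N$ and $a,b>0$, the identity
\[
1 - Q_n(a,b) \;=\; {\rm e}^{-(a^2+b^2)/2}\sum_{k \geq n}\left(\frac{b}{a}\right)^k I_k(ab),
\]
since setting $a=\sqrt{\lambda}$, $b=\sqrt{x}$ and using $F_{2n,\lambda}(x) = 1 - Q_n(\sqrt{\lambda},\sqrt{x})$ then yields \eqref{DJM1001}.

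The first step is to establish the Neumann--type series
\[
Q_n(a,b) \;=\; {\rm e}^{-(a^2+b^2)/2}\sum_{k \geq 1-n}\left(\frac{a}{b}\right)^k I_k(ab).
\]
For $n=1$ this is the classical expansion of $Q_1$, derivable by inserting the power series \eqref{Star} for $I_0(at)$ into \eqref{M1} and integrating term by term against the Gaussian weight. The general integer case follows by induction on $n$ through the recurrence
\[
Q_{n+1}(a,b) - Q_n(a,b) \;=\; (b/a)^n\,{\rm e}^{-(a^2+b^2)/2}\,I_n(ab),
\]
which is obtained from one integration by parts in \eqref{M1} together with the standard Bessel identity $\tfrac{\rm d}{{\rm d}t}\bigl(t^\nu I_\nu(at)\bigr) = a\,t^\nu I_{\nu-1}(at)$, the boundary term at $+\infty$ being killed by the Gaussian factor.

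The second step is to apply the generating function of the modified Bessel functions, ${\rm e}^{z(u+u^{-1})/2} = \sum_{k\in\mathbb Z} u^k I_k(z)$, specialised to $u=a/b$ and $z=ab$, which gives the identity
\[
{\rm e}^{(a^2+b^2)/2} \;=\; \sum_{k\in\mathbb Z}\left(\frac{a}{b}\right)^k I_k(ab).
\]
Subtracting the Neumann expansion of $Q_n$ from this equality eliminates every term with $k \geq 1-n$ and leaves $1 - Q_n(a,b) = {\rm e}^{-(a^2+b^2)/2}\sum_{k \leq -n}(a/b)^k I_k(ab)$; the change $k \mapsto -k$ combined with the integer-order parity $I_{-k}(z) = I_k(z)$ then produces the displayed tail sum, completing the argument.

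The step I expect to be the principal obstacle is legitimising the term-by-term manipulations: one has to verify absolute convergence of the doubly infinite Bessel series throughout the prescribed region $\min\{\lambda,x\}>0$, so that the Neumann expansion may be rearranged and subtracted cleanly from the generating function. This is settled by the asymptotics $I_k(ab) \sim (ab/2)^k/\Gamma(k+1)$ as $k \to +\infty$, coupled with the symmetry $I_{-k}=I_k$, which together secure absolute convergence on both tails for every $a,b>0$ and so render every interchange used above admissible.
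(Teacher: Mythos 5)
Your argument is correct, and it is worth noting that the paper itself offers no proof of this statement: Theorem~\ref{pomoc} is quoted from \cite[Theorem 2.1]{DJM_MJOM}, and the only thing the present authors add (in item {\sf A} of Section 5) is the observation that the displayed form \eqref{DJM1001} is a condensed rewriting of the two-sum formula proved there. Your route — reducing \eqref{DJM1001} via \eqref{distribution} to the identity $1-Q_n(a,b)={\rm e}^{-(a^2+b^2)/2}\sum_{k\ge n}(b/a)^kI_k(ab)$, establishing the Neumann expansion of $Q_n$ by induction from the recurrence $Q_{n+1}-Q_n=(b/a)^n{\rm e}^{-(a^2+b^2)/2}I_n(ab)$ (one integration by parts using $\frac{\rm d}{{\rm d}t}\bigl(t^\nu I_\nu(at)\bigr)=a\,t^\nu I_{\nu-1}(at)$), and then subtracting from the bilateral generating-function identity ${\rm e}^{(a^2+b^2)/2}=\sum_{k\in\mathbb Z}(a/b)^kI_k(ab)$ with $I_{-k}=I_k$ — is a clean, self-contained derivation, and your convergence check via $I_k(ab)\sim(ab/2)^k/k!$ on both tails is exactly what is needed to license the subtraction. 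The one place where you compress detail is the base case $Q_1$: term-by-term integration of \eqref{M1} first produces the double sum ${\rm e}^{-(a^2+b^2)/2}\sum_{m\ge0}\frac{(a^2/2)^m}{m!}\sum_{j=0}^{m}\frac{(b^2/2)^j}{j!}$, and one further (absolutely convergent, all terms positive) rearrangement with $k=m-j$ is required to recognize this as $\sum_{k\ge0}(a/b)^kI_k(ab)$; this is classical and unproblematic, but you should say it. What your approach buys is independence from the machinery of \cite{DJM_MJOM}; what it costs is nothing, since every ingredient (the Marcum $Q$ recurrence, the Bessel generating function, the parity $I_{-k}=I_k$) is standard.
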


In 1971 Agrest and Maksimov \cite[p. 139, Eq. (6.15)]{Agrest} concluded that
  \begin{equation}\label{AM}\int_0^z{\rm{e}}^{\mp \alpha t}I_0(t)\,{\rm d}t
	   = \dfrac1{\sqrt{\alpha^2-1}}\left\{1-{\rm{e}}^{\mp\alpha z}\left[I_0(z)
		 + 2 Y_2\left(\dfrac{z}{c},z\right) \pm 2 Y_1\left(\dfrac{z}{c},z\right)\right]\right\},
  \end{equation}
where the parameters $\alpha, c$ are related as $2 \alpha = c+c^{-1}$ and $Y_\nu(w,z)$ stands
for the Lommel function of two variables of the order $\nu$, defined by the Neumann series
\cite[p. 138, Eq. (6.5)]{Agrest}
   \[ Y_\nu(w,z)=\sum_{k\ge0}\left(\dfrac{w}{z}\right)^{\nu+2k}I_{\nu+2k}(z). \]

\begin{theorem}\label{theorem1}
For all $n\in\mathbb{N}$ and $\min\{\lambda,x\}>0$ there holds
   \begin{equation}\label{usporedba2}
      F_{2n,\lambda}(x)\!= \!\dfrac12 - {\rm e}^{-\frac{\lambda+x}{2}}
			                    \left(\frac12 I_0\left(\sqrt{\lambda x}\right)\! +\!
													\sum_{k=1}^{n-1}\left(\dfrac x\lambda\right)^{\frac{k}2}
                          I_k(\sqrt{\lambda x})\right)\!-\dfrac{|\lambda-x|}{4\sqrt{\lambda x}}
													S_0(\sqrt{\lambda x},\omega),
   \end{equation}
where $S_0$ is defined in \eqref{functionS} and $\omega=(x+\lambda)\,(2\sqrt{\lambda x}\,)^{-1} -1$.
\end{theorem}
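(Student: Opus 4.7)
The plan is to start from Theorem~\ref{pomoc}, rewrite the tail $\sum_{k\ge n}$ as the full series $\sum_{k\ge 0}$ minus its first $n$ terms, and thereby reduce the whole problem to evaluating $\Sigma := \sum_{k\ge 0}(x/\lambda)^{k/2}I_k(\sqrt{\lambda x})$ in a form in which $S_0(\sqrt{\lambda x},\omega)$ appears explicitly. The remaining low-order block $e^{-(\lambda+x)/2}\{I_0(\sqrt{\lambda x}) + \sum_{k=1}^{n-1}(x/\lambda)^{k/2}I_k(\sqrt{\lambda x})\}$ is already in the shape required by \eqref{usporedba2}.

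To compute $\Sigma$, I would first split the even- and odd-index contributions and recognize $\Sigma = Y_0(w,z)+Y_1(w,z)$ with $z=\sqrt{\lambda x}$ and $w = z\sqrt{x/\lambda}$, i.e.\ $w=z/c$ for $c=\sqrt{\lambda/x}$. The trivial re-indexing $Y_0(w,z) = I_0(z) + Y_2(w,z)$ then places the combination $I_0(z) + 2Y_2(w,z) + 2Y_1(w,z) = 2\Sigma - I_0(z)$ in precisely the form appearing on the right-hand side of the Agrest--Maksimov identity \eqref{AM}. Choosing $\alpha$ by $2\alpha = c + c^{-1} = (x+\lambda)/\sqrt{\lambda x}$ makes $\alpha = \omega+1$ and $\sqrt{\alpha^2-1} = |x-\lambda|/(2\sqrt{\lambda x})$, and identifies the integral on the left of \eqref{AM} with $S_0(\sqrt{\lambda x},\omega)$ via \eqref{functionS}. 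Solving \eqref{AM} for $\Sigma$ yields $e^{-(\lambda+x)/2}\Sigma = \tfrac{1}{2} + \tfrac{1}{2}e^{-(\lambda+x)/2}I_0(\sqrt{\lambda x}) - \tfrac{|x-\lambda|}{4\sqrt{\lambda x}}S_0(\sqrt{\lambda x},\omega)$; substituting this into the split representation and collapsing the two $I_0$ contributions into a single $\tfrac12 I_0$ delivers \eqref{usporedba2}.

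The step I expect to be the main obstacle is handling the two regimes $x<\lambda$ and $x>\lambda$ uniformly. The quadratic $c + c^{-1} = 2\alpha$ admits the roots $\sqrt{\lambda/x}$ and $\sqrt{x/\lambda}$, and \eqref{AM} selects one branch through the Lommel argument $z/c$; the choice $c=\sqrt{\lambda/x}$ reproduces the desired $(x/\lambda)^{k/2}$ series in $\Sigma$ only when $x<\lambda$. In the opposite regime one must switch to $c = \sqrt{x/\lambda}$, so that \eqref{AM} returns the companion sum $\sum_{k\ge 0}(\lambda/x)^{k/2}I_k(\sqrt{\lambda x})$, which one converts back to $\Sigma$ via the bilateral modified Bessel generating function $\sum_{k\in\mathbb Z}(x/\lambda)^{k/2}I_k(\sqrt{\lambda x}) = e^{(\lambda+x)/2}$. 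Once this bookkeeping is carried out, the symmetry $|x-\lambda|=|\lambda-x|$ in $\sqrt{\alpha^2-1}$ packages both cases into the single uniform identity of the statement.
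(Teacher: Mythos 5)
Your first two paragraphs reproduce the paper's own proof essentially step for step: the same splitting of the tail of \eqref{DJM1001} into the full series $\Sigma$ minus its first $n$ terms, the same re-indexing that turns the bracket in \eqref{AM} into $2\Sigma-I_0(\sqrt{\lambda x})$ (this is exactly \eqref{help}), and the same specialization $c=\sqrt{\lambda/x}$, $\alpha=\omega+1$, $\sqrt{\alpha^2-1}=|x-\lambda|/(2\sqrt{\lambda x})$ followed by solving for $\Sigma$. Up to that point you and the paper coincide.

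The problem is your third paragraph. You are right that the choice of root of $c+c^{-1}=2\alpha$ is the delicate point --- the paper passes over it with ``by simple considerations'' --- but your proposed resolution does not close the gap; it closes it the wrong way. The identity \eqref{AM} holds with $\sqrt{\alpha^2-1}$ interpreted as the \emph{signed} quantity $(c-c^{-1})/2$: differentiating both sides of \eqref{AM} with respect to $z$, using $2I_k'=I_{k-1}+I_{k+1}$, forces $c-\alpha=\sqrt{\alpha^2-1}$, i.e. $\sqrt{\alpha^2-1}=(c-c^{-1})/2$, which for $c=\sqrt{\lambda/x}$ equals $(\lambda-x)/(2\sqrt{\lambda x})$ and agrees with the positive root only when $\lambda>x$. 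If, as you suggest, one switches to $c=\sqrt{x/\lambda}$ for $x>\lambda$ and converts the resulting sum $\Sigma'=\sum_{k\ge0}(\lambda/x)^{k/2}I_k(\sqrt{\lambda x})$ back to $\Sigma$ via $\Sigma+\Sigma'=\mathrm{e}^{(x+\lambda)/2}+I_0(\sqrt{\lambda x})$, the $S_0$--term emerges with the \emph{opposite} sign, namely ${\rm e}^{-(x+\lambda)/2}\Sigma=\tfrac12+\tfrac12{\rm e}^{-(x+\lambda)/2}I_0(\sqrt{\lambda x})+\tfrac{|\lambda-x|}{4\sqrt{\lambda x}}\,S_0(\sqrt{\lambda x},\omega)$. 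So the two regimes do \emph{not} package into the single formula \eqref{usporedba2}; what your argument actually proves uniformly is the version with the signed coefficient $(\lambda-x)/(4\sqrt{\lambda x})$ in place of $|\lambda-x|/(4\sqrt{\lambda x})$. A consistency check confirms the sign flip is not an artifact: as $x\to\infty$ with $\lambda$ fixed the Bessel block vanishes and $S_0(\sqrt{\lambda x},\omega)\to 2\sqrt{\lambda x}/|x-\lambda|$, so \eqref{usporedba2} as written would yield $F_{2n,\lambda}(x)\to\tfrac12-\tfrac12=0$ instead of $1$. In short: for $x<\lambda$ your proof is the paper's proof and is fine; for $x>\lambda$ the final ``packaging'' claim is false, and you must either restrict to $x<\lambda$ or prove (and state) the signed variant.
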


\begin{proof}
Taking $z=\sqrt{\lambda x},\,c=\sqrt{\lambda/x}$ in \eqref{AM} we rewrite the expression in brackets into
   \begin{align}\label{help}
      I_0(\sqrt{\lambda x}) &+ 2Y_2\big(x,\sqrt{ \lambda x}\,\big) + 2Y_1\big(x,\sqrt{\lambda x}\,\big)\notag\\
			   &=  I_0(\sqrt{\lambda x}) + 2 \sum_{k \geq 0} \left(\dfrac x\lambda\right)^{k+1}
				     I_{2k+2}(\sqrt{\lambda x}) + 2 \sum_{k \geq 0}\left(\dfrac x\lambda\right)^{k+\frac12}
						 I_{2k+1}(\sqrt{\lambda x})\notag\\
         &= -I_0(\sqrt{\lambda x}) + 2 \sum_{k \geq 0}\left(\dfrac x\lambda\right)^k
				     I_{2k}(\sqrt{\lambda x}) + 2 \sum_{k \geq 0}\left(\dfrac x\lambda\right)^{k+\frac12}
						 I_{2k+1}(\sqrt{\lambda x})\notag\\
         &= -I_0(\sqrt{\lambda x}) + 2 \sum_{k \geq 0} \left(\dfrac x\lambda\right)^{\frac{k}2}
				     I_{k}(\sqrt{\lambda x}),
   \end{align}
where in the last equality we employed the elementary identity
   \[ \sum_{k \geq 0}a_k=\sum_{k\ge0}a_{2k}+\sum_{k\ge0}a_{2k+1}.\]
By simple considerations it follows from the relations \eqref{AM} and \eqref{help} that
   \[ \sum_{k \geq 0} \left(\dfrac x\lambda\right)^{\frac{k}2} I_{k}(\sqrt{\lambda x})
	      = \frac12 \Big({\rm e}^{\frac{\lambda+x}{2}} + I_0(\sqrt{\lambda x})\Big)
				- \dfrac{{\rm{e}}^{\frac{\lambda+x}{2}}|\lambda-x|}{4\sqrt{\lambda x}}
				  \int_0^{\sqrt{\lambda x}} {\rm{e}}^{-\frac{\lambda+x}{2\sqrt{\lambda x}}t} I_0(t)\,{\rm{d}}t.\]
We deduce the assertion of the theorem combining the above inferred formula with the identity
\eqref{DJM1001} given in Theorem~\ref{pomoc}.
\end{proof}

\section{CDF in terms of incomplete Fox--Wright function} \label{sec2}

In Theorem~\ref{theorem1} we derive a new representation for the CDF of rv $\xi \sim \chi_{2n}'^2(\lambda)$
in terms of the integral $S_0$. In this section, we derive new expression for $S_0$ which yields,
in combination with Theorem~\ref{theorem1}, a new form for the CDF.

The Fox--Wright generalized hypergeometric function with $p$ numerator parameters $a_1, \cdots, a_p$ and
$q$ denominator parameters $b_1,\cdots,b_q$ is defined by the series \cite[pp. 286--287]{Wright}
   \begin{equation}\label{FoxW1}
      {}_p\Psi_q \Big[\! \begin{array}{c} (a_1,A_1),\cdots, (a_p,A_p)  \\
                 (b_1,B_1),\cdots,(b_q,B_q) \end{array}\!\Big| z \Big]
		          = {}_p\Psi_q \Big[\! \begin{array}{c} ({\bf{a}}_p,{\bf{A}}_p)  \\
                 ({\bf{b}}_q,{\bf{B}}_q) \end{array}\!\Big| z \Big] 
							= \sum_{n \geq 0} \dfrac{ \prod\limits_{j=1}^p\Gamma(a_j+n A_j)}
							   {\prod\limits_{j=1}^q\Gamma(b_j+n B_j)} \dfrac{z^n}{n!},
   \end{equation}
where $A_i,\,B_j \ge 0$,  $i=1,\dots,p$, $j=1,\dots,q$. The defining series converges
in the whole complex $z$-plane when
   \[ \Delta := 1+ \sum_{j=1}^qB_j-\sum_{i=1}^pA_i>0;\]
when $\Delta = 0$ the series in \eqref{FoxW1} converges for $|z|<\nabla$, and $|z|=\nabla$ under
the condition $\Re(\mu)>1/2$ where
   \[ \nabla := \left( \prod\limits_{i=1}^pA_i^{-A_i}\right)\left(\prod\limits_{j=1}^qB_j^{B_j}\right),
	              \qquad \mu=\sum_{j=1}^q b_j-\sum_{i=1}^p a_i+\dfrac{p-q}{2}. \]
If in \eqref{FoxW1} we set $A_1 = \cdots = A_p = 1$ and $B_1= \cdots = B_q=1$ we get the generalized
hypergeometric function ${}_pF_q$, up to the multiplicative constant:
   \[ {}_p\Psi_q \Big[ \begin{array}{c} ({\bf{a}}_p,{\bf{1}})\\ ({\bf{b}}_q,{\bf{1}}) \end{array}
	               \Big|\, z \Big] = \dfrac{\Gamma(a_1)\cdots\Gamma(a_p)}{\Gamma(b_1)\cdots\Gamma(b_q)}\,
								 {}_pF_q \Big[ \begin{array}{c} {\bf{a}}_p\\ {\bf{b}}_q \end{array}\Big|\, z \Big]. \]
In what follows, the symbol ${}_p\Psi_q^{(\gamma)}$ stands for the incomplete Fox--Wright function
as a generalization of the complete Fox-Wright function ${}_p\Psi_q$, \cite{Srivastava}. The 
series definition reads \cite[p. 131, Eq. (6.1)]{Srivastava}
   \begin{equation}\label{IncFoxW1}
      {}_p\Psi_q^{(\gamma)} \Big[ \begin{array}{c} (\mu,M,x), ({\bf{a}}_{p-1},{\bf{A}}_{p-1})\\
                  ({\bf{b}}_q,{\bf{B}}_q) \end{array}\Big|\, z \Big]
								= \sum_{n \geq 0} \dfrac{\gamma(\mu+n M,x) \prod\limits_{j=1}^{p-1} \Gamma(a_j+n A_j)}
								  {\prod\limits_{j=1}^{q}\Gamma(b_j+n B_j)} \; \dfrac{z^n}{n!},
   \end{equation}
where $\gamma(a,x)$ denotes the lower incomplete gamma function
   \begin{equation}\label{LIG}
      \gamma(a,x)=\int_0^x{\rm{e}}^{-t}t^{a-1}\,{\rm{d}}t,\qquad \Re(a)>0.
	 \end{equation}
Parameters $M,A_j,B_j>0$ should satisfy the constraint
  \[\Delta^{(\gamma)} = 1+ \sum_{j=1}^qB_j-M-\sum_{j=1}^{p-1}A_j \geq 0,\]
while the convergence conditions coincide with the ones regarding the complete Fox--Wright \eqref{FoxW1}. 
The case $p = q = 1$ leads to the confluent incomplete Fox--Wright hypergeometric function.

\begin{lemma}
For all positive real numbers $\min\{\lambda,x\}>0$ there holds
   \[ S_\nu\Big(\sqrt{\lambda x},\frac{x+\lambda}{2\sqrt{\lambda x}}-1\Big) 
			          = \dfrac{2 (\lambda x)^{\frac{\nu+1}2}}{(x+\lambda)^{\nu+1}}\,
			            {}_1\Psi_1^{(\gamma)} \Big[ \begin{array}{c} (\nu+1,2,\tfrac{x+\lambda}2)  \\
                  (\nu+1,1) \end{array}\Big|\, \frac{\lambda x}{(x+\lambda)^2} \Big]\,.\]
\end{lemma}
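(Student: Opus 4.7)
The plan is to prove the identity by direct series manipulation. First I would substitute the definitional power series \eqref{Star} for $I_\nu$ into the integral defining $S_\nu$ in \eqref{functionS}, with $\omega+1 = (x+\lambda)/(2\sqrt{\lambda x})$, and interchange the order of summation and integration. Since all terms are nonnegative on the compact interval $[0,\sqrt{\lambda x}]$, Fubini--Tonelli validates this step, giving
\[
   S_\nu\Bigl(\sqrt{\lambda x},\tfrac{x+\lambda}{2\sqrt{\lambda x}}-1\Bigr)
     =\sum_{k\geq 0}\frac{1}{\Gamma(\nu+k+1)\,k!\,2^{2k+\nu}}
       \int_0^{\sqrt{\lambda x}} t^{2k+\nu}\,
       {\rm e}^{-\frac{x+\lambda}{2\sqrt{\lambda x}}\,t}\,{\rm d}t .
\]

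Next I would reduce each integral to a lower incomplete gamma function by the substitution $u=\frac{x+\lambda}{2\sqrt{\lambda x}}\,t$. The upper limit transforms to $(x+\lambda)/2$, and the integral becomes
\[
   \Bigl(\tfrac{2\sqrt{\lambda x}}{x+\lambda}\Bigr)^{2k+\nu+1}
   \int_0^{(x+\lambda)/2} u^{2k+\nu}\,{\rm e}^{-u}\,{\rm d}u
   =\Bigl(\tfrac{2\sqrt{\lambda x}}{x+\lambda}\Bigr)^{2k+\nu+1}
    \gamma\bigl(2k+\nu+1,\tfrac{x+\lambda}{2}\bigr),
\]
by definition \eqref{LIG} of $\gamma(a,x)$.

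Then I would gather the powers: the factor $(2\sqrt{\lambda x})^{2k+\nu+1}/2^{2k+\nu}$ simplifies to $2(\lambda x)^{k+(\nu+1)/2}$, so pulling the $k$-independent constants in front yields
\[
   S_\nu\Bigl(\sqrt{\lambda x},\tfrac{x+\lambda}{2\sqrt{\lambda x}}-1\Bigr)
   =\frac{2(\lambda x)^{(\nu+1)/2}}{(x+\lambda)^{\nu+1}}
     \sum_{k\geq 0}\frac{\gamma\bigl(\nu+1+2k,\tfrac{x+\lambda}{2}\bigr)}
                         {\Gamma(\nu+1+k)}\,
     \frac{1}{k!}\left(\frac{\lambda x}{(x+\lambda)^2}\right)^{\!k}.
\]

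Finally, I would recognize the remaining series as exactly the incomplete Fox--Wright function \eqref{IncFoxW1} with parameters $(\mu,M)=(\nu+1,2)$, the truncation parameter $\tfrac{x+\lambda}{2}$ in the numerator, and $(b_1,B_1)=(\nu+1,1)$ in the denominator, evaluated at $z=\lambda x/(x+\lambda)^2$. I would also verify the convergence constraint $\Delta^{(\gamma)}=1+B_1-M=0$ with the additional check that $|z|<\nabla$: here $\nabla = 2^{-2}\cdot 1 = 1/4$, and $\lambda x/(x+\lambda)^2\leq 1/4$ by the AM--GM inequality, with equality only at $x=\lambda$, at which point the condition $\Re(\mu)>1/2$ on the boundary must be checked (here $\mu = (\nu+1)-(\nu+1)+0 = 0$, borderline, but the presence of $\gamma$ instead of $\Gamma$ in the numerator secures absolute convergence via $\gamma(a,x)\leq \Gamma(a)$). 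The only real obstacle is the bookkeeping of powers of $2$, $\sqrt{\lambda x}$, and $(x+\lambda)$ in the previous step; everything else is a matter of matching definitions.
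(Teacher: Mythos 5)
Your proof is correct and follows essentially the same route as the paper's: expand $I_\nu$ via \eqref{Star}, interchange sum and integral, substitute $u=(\omega+1)t$ to produce $\gamma(\nu+1+2k,\tfrac{x+\lambda}{2})$, collect the powers of $2$, $\sqrt{\lambda x}$ and $(x+\lambda)$, and match the result against definition \eqref{IncFoxW1}; your bookkeeping of the prefactor $\tfrac{2(\lambda x)^{(\nu+1)/2}}{(x+\lambda)^{\nu+1}}$ and of the argument $\tfrac{\lambda x}{(x+\lambda)^2}$ checks out. The only caveat concerns your boundary remark at $x=\lambda$: the bound $\gamma(a,x)\le\Gamma(a)$ by itself does not secure convergence there, since the \emph{complete} series $\sum_k \tfrac{\Gamma(\nu+1+2k)}{\Gamma(\nu+1+k)\,k!}4^{-k}$ diverges at $|z|=\nabla=1/4$ for $\nu\ge -\tfrac12$; what actually saves the day is the estimate $\gamma(\nu+1+2k,c)\le c^{\nu+1+2k}/(\nu+1+2k)$, which makes the incomplete series entire in $z$ --- a point the paper's proof does not address at all.
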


\begin{proof}
By expanding the Bessel function in \eqref{functionS}, we obtain an infinite
series in terms of the lower incomplete gamma functions \eqref{LIG}:
   \begin{align*}
      S_\nu(\sqrt{\lambda x},\omega) &= \sum_{k \geq 0}\dfrac{2^{-(2k+\nu)}}{\Gamma(\nu+k+1)\,k!} 
			      \int_0^{\sqrt{\lambda x}} {\rm e}^{-(\omega+1)t}t^{2k+\nu}\,{\rm d}t\\
         &= \dfrac{1}{2^\nu(\omega+1)^{\nu+1}}\sum_{k\ge0}\dfrac{(2(\omega+1))^{-2k}}{\Gamma(\nu+k+1)\,k!} 
				    \int_0^{\frac{x+\lambda}{2}} {\rm e}^{-u}u^{2k+\nu}\,{\rm d}u\\
         &= \dfrac{1}{2^\nu(\omega+1)^{\nu+1}} \sum_{k \geq 0} \dfrac{\gamma(\nu+1+2k,(x+\lambda)/2)}
				    {\Gamma(\nu+k+1)\,k!\,(2(\omega+1))^{2k}}
   \end{align*}
which is equivalent to the statement, being $\omega = (x+\lambda)(2\sqrt{\lambda x})^{-1}-1$.
\end{proof}

The previous expression is very convenient for computing CDF $F_{n,\lambda}$ for not too large values 
of the variables. 

Concerning \eqref{CDF_Temme2} and Lemma 1 we deduce the following result.

\begin{theorem}
For all positive real numbers $\min\{\lambda,x\}>0$ there holds
   \begin{align*}
      F_{n,\lambda}(x) &= \dfrac{x^{\frac{n}2}}{(x+\lambda)^\frac{n}2}\,\left\{{}_1\Psi_1^{(\gamma)} 
				      \Big[ \begin{array}{c} \big(\frac{n}{2},2, \tfrac{x+\lambda}2\big) \\
              (\frac{n}{2},1) \end{array}\Big|\, \frac{\lambda x}{(x+\lambda)^2} \Big]\right.\\
					&\qquad\qquad\qquad \left.-\dfrac{\lambda}{x+\lambda}\, {}_1\Psi_1^{(\gamma)} 
					    \Big[\begin{array}{c} \big(\frac{n}{2}+1,2,\tfrac{x+\lambda}2\big)  \\
              (\frac{n}{2}+1,1) \end{array}\Big|\, \frac{\lambda x}{(x+\lambda)^2} \Big]\right\}.
    \end{align*}
\end{theorem}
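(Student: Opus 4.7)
The plan is to start from the modified symmetric Temme representation \eqref{CDF_Temme2}, namely
\[
F_{n,\lambda}(x) = \dfrac12\Big(\dfrac{x}{\lambda}\Big)^{n/4}\Big\{S_{n/2-1}\big(\sqrt{\lambda x},\omega\big) - \sqrt{\dfrac{\lambda}{x}}\,S_{n/2}\big(\sqrt{\lambda x},\omega\big)\Big\},
\]
with $\omega=(x+\lambda)/(2\sqrt{\lambda x})-1$, and then substitute directly the closed form of $S_\nu\big(\sqrt{\lambda x},\omega\big)$ furnished by Lemma 1, applied once with $\nu = n/2-1$ and once with $\nu = n/2$. No other analytic input is required.

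Concretely, Lemma 1 gives
\[
S_{n/2-1}\big(\sqrt{\lambda x},\omega\big) = \dfrac{2(\lambda x)^{n/4}}{(x+\lambda)^{n/2}}\,{}_1\Psi_1^{(\gamma)}\Big[\!\begin{array}{c}(n/2,2,(x+\lambda)/2)\\(n/2,1)\end{array}\!\Big|\,\dfrac{\lambda x}{(x+\lambda)^2}\Big],
\]
and an analogous expression for $S_{n/2}$ with $\nu+1$ replaced by $n/2+1$. Inserting the first into the Temme form, the prefactor collapses as $\tfrac12(x/\lambda)^{n/4}\cdot 2(\lambda x)^{n/4}/(x+\lambda)^{n/2} = x^{n/2}/(x+\lambda)^{n/2}$, which matches precisely the leading factor of the claimed identity.

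The second term requires only a bit more bookkeeping: one multiplies $\tfrac12(x/\lambda)^{n/4}\sqrt{\lambda/x}$ by $2(\lambda x)^{(n/2+1)/2}/(x+\lambda)^{n/2+1}$ and checks that the result simplifies to $\lambda x^{n/2}/(x+\lambda)^{n/2+1}$, i.e.\ to the desired factor $\big(x^{n/2}/(x+\lambda)^{n/2}\big)\cdot\lambda/(x+\lambda)$. This is just careful tracking of the $\lambda$- and $x$-exponents, and it is indeed the only non-trivial (although routine) step of the whole argument. Combining the two contributions with the minus sign inherited from \eqref{CDF_Temme2} yields the stated representation, and the convergence condition $\Delta^{(\gamma)}\geq 0$ for the incomplete Fox--Wright series on the right is trivially met since here $p=q=1$, $M=2$, $A_1$ is absent and $B_1=1$, so the series has infinite radius of convergence in the $z$-variable; in particular it converges for $z=\lambda x/(x+\lambda)^2 \in [0,1/4]$ whenever $\min\{\lambda,x\}>0$, so the substitution is legitimate.

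\textbf{Main obstacle.} There is essentially no conceptual obstacle; the derivation is a direct substitution. The only potentially error-prone point is the manipulation of fractional exponents of $\lambda$ and $x$ in the second summand, where one must ensure that the combination $(x/\lambda)^{n/4}\sqrt{\lambda/x}\,(\lambda x)^{(n+2)/4}$ simplifies cleanly to $\lambda x^{n/2}$ before factoring out $x^{n/2}/(x+\lambda)^{n/2}$. Once this bookkeeping is done, the theorem follows immediately.
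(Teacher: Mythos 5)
Your proposal is correct and takes essentially the same route as the paper: the theorem appears there immediately after Lemma~1 with only the remark that it is deduced from \eqref{CDF_Temme2} and Lemma~1, i.e.\ precisely the substitution $\nu=\tfrac{n}{2}-1$ and $\nu=\tfrac{n}{2}$ followed by the exponent bookkeeping you carry out, and your computation of both prefactors checks out. (A minor side inaccuracy: by the paper's stated sufficient condition the series has $\Delta^{(\gamma)}=0$ and radius $\nabla=1/4$ rather than an infinite radius of convergence, but since $\lambda x/(x+\lambda)^2\le 1/4$ and $\gamma(\mu+2n,c)\le c^{\mu+2n}/(\mu+2n)$ the convergence on the relevant range holds in any case, so this does not affect the argument.)
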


We express now $S_0$ in terms of the incomplete Fox--Wright function \eqref{IncFoxW1}.

\begin{lemma}\label{lemma1}
For all positive real numbers $p,b>0$ there holds
   \begin{align}\label{FW1}
      S_0(p, b) &= \dfrac1p\,{}_1\Psi_1^{(\gamma)} \Big[ \begin{array}{c} (1,2,p b)  \\
                  (1,1) \end{array}\Big|\, \frac1{4 p^2} \Big]  \\ \label{FW102}
								&= \dfrac1{2p^3}\,{}_1\Psi_1^{(\gamma)} \Big[ \begin{array}{c} (2, 2, p b)  \\
                 (2, 1) \end{array}\Big|\, \frac1{4 p^2}  \Big]- \dfrac{{\rm e}^{-p b}}{p}\, I_0(b)\,.
    \end{align}
\end{lemma}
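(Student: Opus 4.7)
My plan for both identities is the same mechanism: expand the Bessel function appearing in \eqref{functionS} (at $\nu=0$) into its defining power series \eqref{Star}, interchange the summation with the integration (justified by absolute and uniform convergence of the Bessel series on the compact interval $[0,b]$), apply the substitution $u=pt$ to recognise each elementary integral as a lower incomplete gamma value \eqref{LIG}, and finally match the resulting series against the template \eqref{IncFoxW1}.

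For \eqref{FW1} this runs through in a single pass. Substituting $I_0(t)=\sum_{k\ge0}t^{2k}/(4^k(k!)^2)$ into $S_0(p,b)=\int_0^b e^{-pt}I_0(t)\,\mathrm{d}t$ and integrating term by term produces $\int_0^b e^{-pt}t^{2k}\,\mathrm{d}t=\gamma(2k+1,pb)/p^{2k+1}$; factoring $1/p$ and the weight $(4p^2)^{-k}$ yields
\[
S_0(p,b)=\frac{1}{p}\sum_{k\ge0}\frac{\gamma(1+2k,\,pb)}{\Gamma(1+k)\,k!}\,\Big(\frac{1}{4p^{2}}\Big)^{k},
\]
which is exactly $p^{-1}\,{}_1\Psi_1^{(\gamma)}[(1,2,pb);(1,1)\,|\,1/(4p^{2})]$ by the definition \eqref{IncFoxW1}.

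For \eqref{FW102} the natural route is integration by parts with $u=I_0(t)$, $\mathrm{d}v=e^{-pt}\,\mathrm{d}t$, using the derivative identity $I_0'(t)=I_1(t)$. The $t=b$ boundary contribution is exactly $-e^{-pb}I_0(b)/p$, while the surviving integral $p^{-1}\int_0^b e^{-pt}I_1(t)\,\mathrm{d}t$ is handled by repeating the series argument of the previous paragraph, now with $I_1(t)=\sum_{k\ge0}t^{2k+1}/(2^{2k+1}k!(k+1)!)$: each term integrates to $\gamma(2k+2,pb)/p^{2k+2}$, and identifying $k!(k+1)!=k!\,\Gamma(k+2)$ with the denominator in \eqref{IncFoxW1} gives
\[
\int_0^b e^{-pt}I_1(t)\,\mathrm{d}t=\frac{1}{2p^{2}}\,{}_1\Psi_1^{(\gamma)}\Big[\begin{array}{c}(2,2,pb)\\(2,1)\end{array}\Big|\,\frac{1}{4p^{2}}\Big].
\]
Dividing by $p$ and combining with the boundary piece assembles the right-hand side of \eqref{FW102}. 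The only real work is parameter bookkeeping in the Fox--Wright template: once $\Gamma(1+k)=k!$ is matched for \eqref{FW1} and $\Gamma(2+k)=(k+1)!$ for \eqref{FW102}, both identifications are mechanical, and the admissibility $\Delta^{(\gamma)}=1+1-2=0$ with argument $1/(4p^{2})\le 1/4$ takes care of convergence.
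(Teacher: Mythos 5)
Your derivation of \eqref{FW1} is correct and in fact more direct than the paper's: the paper reaches the same intermediate series \eqref{DM1} by writing $\int_0^b{\rm e}^{-pt}t^{2k}\,{\rm d}t$ as a $2k$--fold parameter derivative of $(1-{\rm e}^{-pb})/p$ and passing through ${}_1F_1$ contiguous relations, whereas your substitution $u=pt$ produces $\gamma(2k+1,pb)/p^{2k+1}$ in one line. No objection to that half.

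The second half has a genuine gap. Integration by parts with $v=-{\rm e}^{-pt}/p$ gives the boundary contribution $\bigl[-{\rm e}^{-pt}I_0(t)/p\bigr]_0^b=-{\rm e}^{-pb}I_0(b)/p+1/p$, because $I_0(0)=1$; you have silently discarded the $t=0$ endpoint. Restoring it, your (otherwise correct) evaluation of $\int_0^b{\rm e}^{-pt}I_1(t)\,{\rm d}t$ yields
\[
S_0(p,b)=\frac1p+\frac1{2p^3}\,{}_1\Psi_1^{(\gamma)}\Big[\begin{array}{c}(2,2,pb)\\(2,1)\end{array}\Big|\,\frac1{4p^2}\Big]-\frac{{\rm e}^{-pb}}{p}\,I_0(b),
\]
which differs from \eqref{FW102} by the additive term $1/p$. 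This is not a defect of your method: the printed identity \eqref{FW102} is itself off by $1/p$. (Check $p=b=1$: the right-hand side of \eqref{FW102} evaluates to about $-0.326$, while $S_0(1,1)=\int_0^1{\rm e}^{-t}I_0(t)\,{\rm d}t\approx0.674$; the discrepancy is exactly $1$. Or let $b\to0^+$: the left side tends to $0$ while the right side tends to $-1/p$.) The paper's own proof loses the same term by applying the recurrence \eqref{recc} to the $k=0$ summand of \eqref{DM1} as though $a\,\gamma(a,x)$ vanished at $a=0$, whereas $\gamma(1,x)=1-{\rm e}^{-x}$, i.e. $\lim_{a\to0^+}a\,\gamma(a,x)=1$, so the $k=0$ term contributes $\tfrac1p(1-{\rm e}^{-pb})$ and not merely $-\tfrac1p{\rm e}^{-pb}$. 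In short, your write-up as it stands proves neither the corrected identity (because of the dropped boundary term) nor the printed one (which is false); carried out carefully, your integration by parts actually exposes the error in \eqref{FW102}, and the same correction $+1/p$ must then be propagated into the second display of Theorem 4.
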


\begin{proof} Using the definition of the modified Bessel function \eqref{Star} and the special case
of the confluent hypergeometric (Kummer) function \cite[p. 327, Eq. {\bf 13.6.2}]{NIST}
   \[{}_1F_1 \Big[ \begin{array}{c} 1\\2 \end{array} \Big| z\Big]=\dfrac{{\rm{e}}^z-1}{z}, \]
we obtain
   \begin{align*}
      S_0(p, b) &= \int_0^b {\rm e}^{-p x}\, I_0(x)\, {\rm d}x
			           = \sum_{k \geq 0} \dfrac{4^{-k}}{\Gamma(k+1)\, k!}\,
								   \int_0^b {\rm e}^{-p x}\, x^{2k} \, {\rm d}x \\
								&= \sum_{k \geq 0} \dfrac{4^{-k}}{\Gamma(k+1)\, k!}\,
								   \int_0^b \Big( \dfrac{\partial}{\partial p}\Big)^{2k}\, {\rm e}^{-p x} \, {\rm d}x \\
								&= \sum_{k \geq 0} \dfrac{4^{-k}}{\Gamma(k+1)\, k!}\,
								   \Big( \dfrac{\partial}{\partial p}\Big)^{2k}\,\dfrac{1-{\rm e}^{-p b}}p \\
								&= b \sum_{k \geq 0} \dfrac{4^{-k}}{\Gamma(k+1)\, k!}\,
								   \Big( \dfrac{\partial}{\partial p}\Big)^{2k} \,{}_1F_1 \Big[ \begin{array}{c} 1\\2
									 \end{array} \Big| -p b\Big]\,.
	 \end{align*}
As the derivative of the hypergeometric function equals
   \begin{align*}
	    \Big( \dfrac{\partial}{\partial p}\Big)^{2k} \,
		              {}_1F_1 \Big[ \begin{array}{c} 1\\2 \end{array} \Big| -p b\Big]
			         &= \dfrac{(1)_{2k}\,b^{2k}}{(2)_{2k}}\, {}_1F_1 \Big[ \begin{array}{c} 2k+1\\
			            2k+2 \end{array} \Big| -p b\Big] \\
							 &= \dfrac{b^{2k}}{2k+1}\, {}_1F_1 \Big[ \begin{array}{c} 2k+1\\ 2k+2 \end{array} 
								  \Big| -p b\Big] =: H_1\,,
	 \end{align*}
moreover
   \[ H_1 = \dfrac{b^{2k}}{2k+1} \cdot \dfrac{2k+1}{(pb)^{2k+1}}\, \gamma(2k+1, pb)
	        = \dfrac{\gamma(2k+1, pb)}{b\, p^{2k+1}}\,.\]
We have
   \begin{equation} \label{DM1}
	    S_0(p, b) = \dfrac1p \sum_{k \geq 0}\dfrac{\gamma(2k+1, pb)}{\Gamma(k+1)\, k!}\,\dfrac1{(4p^2)^k}\,,
	 \end{equation}
which is equivalent with the first equality in \eqref{FW1}. Finally, having in mind the contiguous 
recurrence formula \cite[p. 178, Eq. {\bf 8.8.1}]{NIST}
   \begin{equation}\label{recc}
	    \gamma(a+1, x) = a\, \gamma(a,x) - x^a\, {\rm e}^{-x}\,,
	 \end{equation}
we transform \eqref{DM1} into an elegant (seemingly new) formula
   \begin{align*}
	    S_0(p, b) &= \dfrac2p \sum_{k \geq 1}\dfrac{\gamma(2k, pb)}{\Gamma(k)\, k!\, (4p^2)^k} -
			             \dfrac{{\rm e}^{-pb}}{p}\, I_0(b) \\ 
								&= \dfrac{1}{2p^3} \sum_{m \geq 0} \dfrac{\gamma(2m+2, pb)}{\Gamma(m+2)\, m!\, (4p^2)^m} -
			             \dfrac{{\rm e}^{-pb}}{p}\, I_0(b)\,, 
	 \end{align*}
which completes the proof of \eqref{FW102}.
\end{proof}

\begin{remark} 
The recursion formula \eqref{recc} should be used with care in computations, because the 
numerical recursion is rather unstable, consult for instance {\rm \cite[p. 114]{Gil}}. 
\end{remark} 

Now, combining Theorem~\ref{theorem1} and Lemma~\ref{lemma1} we infer the following result.

\begin{theorem}
For all $n\in\mathbb{N}$ and $\min\{\lambda,x\}>0$ there holds
   \begin{align*}
      F_{2n,\lambda}(x) &= \dfrac12-{\rm e}^{-\frac{\lambda+x}{2}}
			      \left(\frac12 I_0\left(\sqrt{\lambda x}\right) +
						\sum_{k=1}^{n-1} \left(\dfrac x\lambda \right)^{\frac{k}2}
            I_k(\sqrt{\lambda x})\right)\\
		   &\qquad - \dfrac{|\lambda-x|}{4\lambda x}\,
						{}_1\Psi_1^{(\gamma)} \Big[ \begin{array}{c} \big(1,2,(\sqrt{x}-\sqrt{\lambda})^2/2\big)  \\
            (1,1) \end{array}\Big|\, \frac1{4\lambda x} \Big]\,.
   \end{align*}
Moreover
   \begin{align*}
        F_{2n,\lambda}(x) &= \dfrac12-{\rm e}^{-\frac{x+\lambda}{2}}
			      \left(\frac12 I_0\left(\sqrt{\lambda x}\right) +
						\sum_{k=1}^{n-1} \left(\dfrac x\lambda \right)^{\frac{k}2}
            I_k(\sqrt{\lambda x})\right)\\
			&\qquad-\dfrac{|\lambda-x|}{4\lambda x}\Bigg(\frac1{2\lambda x}\,{}_1\Psi_1^{(\gamma)}
			      \Big[ \begin{array}{c} \big(2,2, \tfrac{(\sqrt{x}-\sqrt{\lambda})^2}2\big)\\ 
						(2,1) \end{array}\Big| \frac1{4\lambda x} \Big] \\
			&\qquad \qquad - {\rm e}^{-\frac{(\sqrt{x}-\sqrt{\lambda})^2}{2}}  
						I_0\Big(\frac{x+\lambda}{2 \sqrt{\lambda x}} -1\Big)\Bigg).
   \end{align*}
\end{theorem}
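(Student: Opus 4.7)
The plan is to substitute Lemma~\ref{lemma1} directly into the representation of $F_{2n,\lambda}(x)$ furnished by Theorem~\ref{theorem1}. The latter contains the single occurrence of $S_0\bigl(\sqrt{\lambda x},\omega\bigr)$ with $\omega=(x+\lambda)(2\sqrt{\lambda x}\,)^{-1}-1$, and Lemma~\ref{lemma1} provides two distinct closed forms for $S_0(p,b)$, so matching the two parametrisations will produce two representations of the CDF.

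First I would perform the parameter identification $p=\sqrt{\lambda x}$ and $b=\omega$ and reduce the arguments appearing inside the incomplete Fox--Wright symbol. The product simplifies as
\[
 pb \;=\; \sqrt{\lambda x}\!\left(\frac{x+\lambda}{2\sqrt{\lambda x}}-1\right)
        \;=\; \frac{x+\lambda}{2}-\sqrt{\lambda x}
        \;=\; \frac{(\sqrt{x}-\sqrt{\lambda}\,)^{2}}{2},
\]
while $1/(4p^{2})=1/(4\lambda x)$ and $\displaystyle\frac{|\lambda-x|}{4\sqrt{\lambda x}}\cdot\frac{1}{p}=\frac{|\lambda-x|}{4\lambda x}$. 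Substituting \eqref{FW1} into \eqref{usporedba2} and collecting the prefactor then yields the first asserted formula verbatim.

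For the second identity I would instead invoke \eqref{FW102}. The extra multiplicative factor $1/(2p^{3})=1/\bigl(2(\lambda x)^{3/2}\bigr)$ combines with $|\lambda-x|/(4\sqrt{\lambda x}\,)$ to give $|\lambda-x|/(8(\lambda x)^{2})$, which I would rewrite as $\frac{|\lambda-x|}{4\lambda x}\cdot\frac{1}{2\lambda x}$ in order to factor out the common coefficient that already appears in the first representation. Similarly, the subtracted term $e^{-pb}I_{0}(b)/p$ produces, after multiplication by $|\lambda-x|/(4\sqrt{\lambda x}\,)$, the expression
\[
 \frac{|\lambda-x|}{4\lambda x}\;e^{-(\sqrt{x}-\sqrt{\lambda}\,)^{2}/2}\;I_{0}\!\left(\frac{x+\lambda}{2\sqrt{\lambda x}}-1\right),
\]
which matches the remaining summand in the theorem after accounting for the minus sign inherited from Theorem~\ref{theorem1}.

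No real obstacle is expected: the argument reduces to the two routine parameter substitutions above and careful bookkeeping of signs and common factors. The only place where one must be mildly attentive is in reconciling the outer sign so that the term $e^{-pb}I_{0}(b)/p$, which enters \eqref{FW102} with a minus sign, ends up added inside the parentheses of the second representation once the overall $-\,|\lambda-x|/(4\sqrt{\lambda x}\,)$ from Theorem~\ref{theorem1} is pulled through.
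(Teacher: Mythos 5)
Your overall strategy --- substituting Lemma~\ref{lemma1} into the representation \eqref{usporedba2} of Theorem~\ref{theorem1} --- is precisely the route the paper intends (the paper offers nothing beyond the one--line remark that the result follows by ``combining Theorem~\ref{theorem1} and Lemma~\ref{lemma1}''). However, your key step, the parameter identification $p=\sqrt{\lambda x}$, $b=\omega$, does not hold up, and this is a genuine gap rather than bookkeeping. In Lemma~\ref{lemma1} the symbol $S_0(p,b)$ denotes $\int_0^b {\rm e}^{-pt}I_0(t)\,{\rm d}t$ (first slot $=$ exponential rate, second slot $=$ upper limit), whereas in \eqref{functionS}, and hence in Theorem~\ref{theorem1}, $S_0(\sqrt{\lambda x},\omega)=\int_0^{\sqrt{\lambda x}}{\rm e}^{-(\omega+1)t}I_0(t)\,{\rm d}t$ (first slot $=$ upper limit, rate $=\omega+1$). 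With your identification you are actually evaluating $\int_0^{\omega}{\rm e}^{-\sqrt{\lambda x}\,t}I_0(t)\,{\rm d}t$, a different quantity: for $\lambda=1$, $x=4$ one has $\int_0^{1/4}{\rm e}^{-2t}I_0(t)\,{\rm d}t\approx0.198$ while $\int_0^{2}{\rm e}^{-5t/4}I_0(t)\,{\rm d}t\approx0.865$. The identification compatible with \eqref{functionS} is $p=\omega+1=\tfrac{x+\lambda}{2\sqrt{\lambda x}}$ and $b=\sqrt{\lambda x}$, which gives $pb=\tfrac{x+\lambda}{2}$, $\tfrac{1}{4p^2}=\tfrac{\lambda x}{(x+\lambda)^2}$ and prefactor $\tfrac1p=\tfrac{2\sqrt{\lambda x}}{x+\lambda}$ --- i.e.\ exactly the $\nu=0$ case of the first (unnumbered) Lemma of Section~3, not the parameters $\big(\tfrac{(\sqrt x-\sqrt\lambda)^2}{2},\tfrac{1}{4\lambda x}\big)$ appearing in the display you are proving.

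Since you reproduce the printed statement verbatim, the conflation is evidently the paper's as much as yours; but the statement so obtained fails numerically. For $n=1$, $\lambda=4$, $x=1$ the first display gives $\tfrac12-\tfrac12{\rm e}^{-5/2}I_0(2)-\tfrac{3}{16}\,{}_1\Psi_1^{(\gamma)}\big[(1,2,\tfrac12);(1,1)\,\big|\,\tfrac1{16}\big]\approx 0.332$, whereas $F_{2,4}(1)={\rm e}^{-5/2}\sum_{k\ge1}2^{-k}I_k(2)\approx0.082$; the correct substitution yields $\tfrac12-\tfrac12{\rm e}^{-5/2}I_0(2)-\tfrac{3}{10}\,{}_1\Psi_1^{(\gamma)}\big[(1,2,\tfrac52);(1,1)\,\big|\,\tfrac{4}{25}\big]\approx0.082$, as it should. (A separate caveat: \eqref{usporedba2} itself requires the factor $-(\lambda-x)$ rather than $-|\lambda-x|$ in front of $S_0$ to be valid when $x>\lambda$ --- check it against \eqref{DJM1001} at $\lambda=1$, $x=4$ --- so any corrected version of the present theorem inherits that sign issue too.) To repair the argument you must apply Lemma~\ref{lemma1} with $p=\omega+1$, $b=\sqrt{\lambda x}$ and restate the theorem with the parameters $\tfrac{x+\lambda}{2}$ and $\tfrac{\lambda x}{(x+\lambda)^2}$, or else re-derive Lemma~\ref{lemma1} in the variables that \eqref{functionS} actually calls for.
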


\section{CDF in terms of incomplete hypergeometric function}\label{sec3}

Besides the already mentioned lower incomplete gamma function \eqref{LIG}, there is also its
complementary function, i.e. upper incomplete gamma function, defined by
   \begin{equation}\label{UIG}
      \Gamma(a,x) = \int_x^\infty{\rm{e}}^{-t}t^{a-1}\,{\rm{d}}t,\qquad \Re(a)>0
	 \end{equation}
and those two functions satisfy the following decomposition formula \cite[p. 174, Eq. {\bf{8.2.3}}]{NIST}
   \[\gamma(a,x)+\Gamma(a,x)=\Gamma(a),\qquad \Re(a)>0.\]
By means of the incomplete gamma functions, in order to generalize the Pochhammer
symbol \cite[p. 22]{Rainville}
   \[ (\lambda)_{\mu} := \dfrac{\Gamma(\lambda+\mu)}{\Gamma(\lambda)}
                       = \begin{cases}
                            1, & \mbox{if}\; \mu = 0;\, \lambda \in \mathbb C \setminus \{0\}\\
                            \lambda(\lambda+1) \cdots (\lambda + n-1), &
														\mbox{if}\; \mu=n \in \mathbb N;\; \lambda \in \mathbb C
                          \end{cases}\,, \]
in 2012 Srivastava {\textit{et al.}} \cite{SriChauAgar} introduced the incomplete Pochhammer symbols
which lead to a natural generalization and decomposition of a class of hypergeometric functions. Precisely,
the incomplete Pochhammer symbols are defined as
   \[(a;x)_\nu := \dfrac{\gamma(a+\nu,x)}{\Gamma(a)}, \qquad [a;x]_\nu :=
	                \dfrac{\Gamma(a+\nu,x)}{\Gamma(a)}, \qquad a,\nu\in\mathbb{C},x \geq 0,  \]
satisfying the decomposition
   \[(a;x)_\nu+[a;x]_\nu = (a)_\nu,\qquad a,\nu \in \mathbb{C},\,x \geq 0.\]
Accordingly, the generalized incomplete hypergeometric functions have the definitions
   \[\!\! {}_p\gamma_q \Big[ \begin{array}{c} (a_1,x), a_2,\dots,a_p\\ b_1,\dots,b_q \end{array}\Big| z \Big]
			 = {}_p\gamma_q \Big[ \begin{array}{c} (a_1, x),\,{\bf a}_{p-1} \\ {\bf b}_q \end{array}\Big| z \Big]
			 = \sum_{n \geq 0} \dfrac{(a_1;x)_n \prod\limits_{j=2}^{p}(a_j)_n}
						   {\prod\limits_{j=1}^{q}(b_j)_n} \dfrac{z^n}{n!}, \]
and
   \begin{equation}\label{IncGamma2}
      \!\!{}_p\Gamma_q \Big[ \begin{array}{c} [a_1,x], a_2,\dots,a_p\\ b_1,\dots,b_q \end{array}\Big| z \Big]
			  = {}_p\Gamma_q \Big[ \begin{array}{c} [a_1, x], {\bf a}_{p-1}\\ {\bf b}_q \end{array}\Big| z \Big]
			  = \sum_{n \geq 0} \dfrac{[a_1; x]_n \prod\limits_{j=2}^{p}(a_j)_n}
						   {\prod\limits_{j=1}^{q}(b_j)_n} \dfrac{z^n}{n!},
   \end{equation}
provided that the infinite series in each case absolutely converges where the appropriate convergence
constraints coincide with the ones for the generalized hypergeometric function ${}_pF_q$, compare
\cite[p. 675, Remark 7]{SriChauAgar}.

One of the most important and widely used cases of generalized incomplete hypergeometric functions
a those when $p=q=1$, the so--called incomplete confluent hypergeometric (or Kummer) function while for
$p=2$, $q=1$ one have the incomplete Gauss hypergeometric function. Regarding to our results, Srivastava
{\textit{et al.}} proved that \cite[p. 680]{SriChauAgar}
   \begin{equation}\label{M2}
      Q_M(\sqrt{2a},\sqrt{2x}) = {\rm e}^{-a}\,{}_1\Gamma_1 \Big[ \begin{array}{c} [M, x]\\
                                 M \end{array}\Big|\, a \Big].
   \end{equation}
Also, it is not difficult to show the connection between the generalized Marcum $Q$--function and
the Gaussian hypergeometric function ${}_2\Gamma_1$, which we realize by the familiar confluence principle.

\begin{lemma} \label{lemma2}
For all $\min\{a,x\}>0$ there holds
   \[ Q_M(\sqrt{2a},\sqrt{2x}) = {\rm e}^{-a} \lim_{b \to \infty} {}_2\Gamma_1\Big[ \begin{array}{c}
			   [M, x], b\\  M \end{array}\Big|\,\frac{a}b \Big]. \]
\end{lemma}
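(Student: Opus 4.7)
The plan is to invoke the classical confluence principle ${}_2F_1(\alpha,\beta;\gamma;z/\beta)\to{}_1F_1(\alpha;\gamma;z)$ as $\beta\to\infty$, adapted to the incomplete Gauss hypergeometric function ${}_2\Gamma_1$, and then to reduce to the already established relation \eqref{M2}. Substituting $(b)_n(a/b)^n = a^n\prod_{k=0}^{n-1}(1+k/b)$ into the series definition \eqref{IncGamma2}, I would rewrite
\[ {}_2\Gamma_1\Big[\begin{array}{c}[M,x],\,b\\ M\end{array}\Big|\,\tfrac{a}{b}\Big] = \sum_{n\ge 0}\dfrac{[M;x]_n}{(M)_n}\cdot\dfrac{a^n}{n!}\cdot \prod_{k=0}^{n-1}\Big(1+\dfrac{k}{b}\Big). \]
For each fixed $n$ the finite product on the right decreases to $1$ as $b\to\infty$, so the formal termwise limit equals
\[ \sum_{n\ge 0}\dfrac{[M;x]_n}{(M)_n}\cdot\dfrac{a^n}{n!} = {}_1\Gamma_1\Big[\begin{array}{c}[M,x]\\ M\end{array}\Big|\,a\Big], \]
at which point \eqref{M2} directly gives the claimed identity after multiplying by ${\rm e}^{-a}$.

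The one technical issue to address is the interchange of limit and summation. For this I would appeal to Tannery's theorem (dominated convergence for series), fixing any $b_0>a$ and bounding, uniformly for $b\ge b_0$, each summand by the summand at $b=b_0$. The two ingredients are
\[ \dfrac{[M;x]_n}{(M)_n} = \dfrac{\Gamma(M+n,x)}{\Gamma(M+n)}\le 1, \qquad \prod_{k=0}^{n-1}\Big(1+\dfrac{k}{b}\Big)\le\prod_{k=0}^{n-1}\Big(1+\dfrac{k}{b_0}\Big) = \dfrac{(b_0)_n}{b_0^n}, \]
the first being immediate from the definition \eqref{UIG} of the upper incomplete gamma function, the second from the elementwise monotonicity of each factor in $b$. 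This bounds the $n$-th term by $(b_0)_n(a/b_0)^n/n!$, whose sum is the convergent binomial series $(1-a/b_0)^{-b_0}$, providing the required summable majorant.

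The main obstacle, though modest, is precisely this choice of majorant: a naive bound by $n!$ on $(b)_n/b^n$ is too crude, but the monotonicity in $b$ and the condition $b_0>a$ together yield a geometric-type majorant that decouples from $b$. Once this is in place, the termwise passage to the limit, followed by identification with the ${}_1\Gamma_1$ series and application of \eqref{M2}, completes the argument with no further analytic work required.
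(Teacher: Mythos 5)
Your proof is correct, but it runs in the opposite logical direction from the paper's. The paper proves the lemma directly from the integral representation \eqref{M1}: it writes $I_{M-1}$ via the confluence limit $I_\nu(z)=\tfrac{(z/2)^\nu}{\Gamma(\nu+1)}\lim_{b\to\infty}{}_1F_1\big[\begin{smallmatrix} b\\ \nu+1\end{smallmatrix}\big|\tfrac{z^2}{4b}\big]$, expands, integrates term by term to produce the upper incomplete gamma functions $\Gamma(M+n,x)$, and recognizes the resulting series as ${}_2\Gamma_1$ evaluated at $a/b$; the known identity \eqref{M2} is not used, and Proposition~1 is then \emph{deduced} from \eqref{M2} together with the lemma. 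You instead first establish the confluence limit ${}_2\Gamma_1\to{}_1\Gamma_1$ for these parameters --- essentially the content of the paper's Proposition~1 --- and then invoke \eqref{M2} to identify the limit with ${\rm e}^{a}Q_M(\sqrt{2a},\sqrt{2x})$. This is not circular, since your Tannery argument proves the confluence limit independently, and \eqref{M2} is quoted in the paper as an external result of Srivastava \emph{et al.} before the lemma is stated. What your route buys is rigor at the one delicate point: the bound $\Gamma(M+n,x)/\Gamma(M+n)\le 1$ together with the monotone majorant $(b_0)_n(a/b_0)^n/n!$ for $b\ge b_0>a$ gives a clean dominated-convergence justification of the termwise limit, whereas the paper's proof interchanges a limit, an infinite sum and an improper integral without comment. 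What it costs is self-containedness: the paper's computation derives the identity from the defining integral of the Marcum $Q$--function alone, while yours is only as strong as the imported formula \eqref{M2}.
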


\begin{proof}
Considering the limit representation formula of the Kummer function
   \[	I_\nu(z) = \frac{\left(\frac z2\right)^\nu}{\Gamma(\nu+1)}\,\lim_{b\to\infty}
	               {}_1F_1 \Big[\begin{array}{c}
	                b \\ \nu+1 \end{array} \Big| \frac{z^2}{4b} \Big]\,,\]
the integral representation \eqref{M1} yields
   \begin{align*}
      Q_M(\sqrt{2a},\sqrt{2x}) &= \dfrac{{\rm e}^{-a}}{2^{M-1}\Gamma(M)} \lim_{b\to\infty}
			    \sum_{n \geq 0}\dfrac{(b)_n}{(M)_n\,n!}\left(\dfrac{a}{2b}\right)^n
					\int_{\sqrt{2x}}^\infty t^{2M+2n-1}{\rm{e}}^{-t^2/2}\,{\rm{d}}t\\
       &= \dfrac{{\rm e}^{-a}}{\Gamma(M)} \lim_{b\to\infty}\sum_{n \geq 0}\dfrac{(b)_n}{(M)_n\,n!}
			    \left(\dfrac{a}{b}\right)^n \int_{x}^\infty t^{M+n-1}{\rm{e}}^{-t}\,{\rm{d}}t\\
       &= {\rm e}^{-a} \lim_{b\to\infty} \sum_{n \geq 0}\frac{\Gamma(M+n,x)}{\Gamma(M)}\,
			    \dfrac{(b)_n}{(M)_n\,n!}\,
			    \left(\dfrac{a}{b}\right)^n.
   \end{align*}
By the definition \eqref{IncGamma2} this proves the assertion of the lemma.
\end{proof}

The result \eqref{M2} and the formula proven in the Lemma \ref{lemma2} imply the next limit representation 
formula, which follows by the confluence principle for hypergeometric functions.

\begin{proposition}\label{theorem3}
The following limit representation holds true
    \[\lim_{b \to \infty} {}_2\Gamma_1 \Big[\begin{array}{c} [a, x], b\\c \end{array}\Big|\, \frac{z}b\Big]
		      = {}_1\Gamma_1 \Big[ \begin{array}{c} [a, x]\\ c \end{array}\Big|\, z \Big].\]
\end{proposition}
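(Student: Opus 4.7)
The plan is to expand the left-hand side in its defining series \eqref{IncGamma2} and pass to the limit termwise. Writing
\[
{}_2\Gamma_1 \Big[\begin{array}{c} [a, x], b\\ c \end{array}\Big|\, \frac{z}b\Big]
  = \sum_{n \geq 0} \frac{[a;x]_n}{(c)_n}\,\frac{z^n}{n!}\cdot\frac{(b)_n}{b^n},
\]
and noting that the incomplete Pochhammer symbol $[a;x]_n$ does not depend on $b$, the whole problem reduces to the elementary pointwise limit
\[
\frac{(b)_n}{b^n} = \prod_{k=0}^{n-1}\Big(1+\frac{k}{b}\Big) \longrightarrow 1 \qquad \text{as } b \to \infty,
\]
valid for each fixed $n \in \mathbb{N}_0$. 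Granted this together with the legitimacy of interchanging limit and sum, the series collapses to the defining series of ${}_1\Gamma_1$ on the right-hand side, which is the claim. This simply mirrors the classical confluence principle ${}_2F_1(a,b;c;z/b) \to {}_1F_1(a;c;z)$, now in the incomplete setting.

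For the interchange I would use dominated convergence on the counting measure. The decisive observation is that each factor $1+k/b$ is monotonically decreasing in $b>0$, so $(b)_n/b^n$ is decreasing in $b$ for every fixed $n$. Choosing any $b_0 > |z|$, for $b \geq b_0$ the absolute value of the $n$-th summand is bounded by
\[
\frac{(a)_n\,(b_0)_n}{|(c)_n|\,n!}\Big(\frac{|z|}{b_0}\Big)^n,
\]
where I use the elementary estimate $[a;x]_n = \Gamma(a+n,x)/\Gamma(a) \leq (a)_n$ (because $\Gamma(a+n,x) \leq \Gamma(a+n)$ for $x \geq 0$). This majorant is the general term of an absolutely convergent ${}_2F_1(a, b_0; c; |z|/b_0)$-series, provided $c$ is not a non-positive integer so that $(c)_n$ never vanishes.

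The main obstacle, mild as it is, is exactly this dominated-convergence step: one must produce a summable, $b$-independent majorant. The monotonicity of $(b)_n/b^n$ in $b$ handles it cleanly, and the rest reduces to the termwise limit already displayed.
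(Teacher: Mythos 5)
Your proof is correct, but it follows a genuinely different route from the paper's. The paper converts the series for ${}_2\Gamma_1$ into an integral over $[x,\infty)$ of the Kummer kernel ${}_1F_1\big[\begin{smallmatrix} b \\ c\end{smallmatrix}\big|\tfrac{zt}{b}\big]$ against the weight ${\rm e}^{-t}t^{a-1}/\Gamma(a)$, passes the limit $b\to\infty$ under the integral sign using the classical confluence ${}_1F_1\to{}_0F_1$, and then re-expands the resulting integral into the ${}_1\Gamma_1$ series. You instead stay entirely at the level of the defining series, isolating the factor $(b)_n/b^n=\prod_{k=0}^{n-1}(1+k/b)\to 1$ and justifying the termwise passage by dominated convergence: the monotone decrease of $(b)_n/b^n$ in $b$ gives the $b$-independent bound $(b_0)_n/b_0^n$ for $b\ge b_0>|z|$, and $[a;x]_n\le (a)_n$ (valid for $a>0$, $x\ge 0$, which is the regime in which the incomplete Pochhammer symbol via $\Gamma(a+n,x)$ is used here) yields a convergent ${}_2F_1$-type majorant. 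Your argument is more elementary and, notably, supplies exactly the domination step that the paper's proof leaves implicit when it interchanges $\lim_{b\to\infty}$ with $\int_x^\infty$; the paper's route, in exchange, produces an integral representation of ${}_2\Gamma_1$ at argument $z/b$ that is of some independent interest and reduces the confluence to the known complete-function case. One small caveat: your domination argument takes $b\to\infty$ through positive reals (where the monotonicity of $1+k/b$ holds), which matches the intended reading of the proposition, and you correctly flag the standing assumption that $c$ is not a non-positive integer.
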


\begin{proof}
Having in mind the definitions of the generalized incomplete hypergeometric functions \eqref{IncGamma2}
and the upper incomplete gamma function \eqref{UIG}, respectively, we get
   \begin{align*}
      {}_2\Gamma_1 \Big[ &\begin{array}{c} [a, x], b\\ c \end{array}\Big|\, \frac{z}b \Big]
			    = \sum_{n \geq 0}\dfrac{\Gamma(a+n,x)\,(b)_n}{\Gamma(a) (c)_n\,n!}\Big(\dfrac{z}b\Big)^n\\
         &= \int_x^\infty {\rm e}^{-t} \dfrac{t^{a-1}}{\Gamma(a)} \sum_{n \geq 0} \dfrac{(b)_n}{(c)_n\,n!}
				    \left(\dfrac{zt}{b}\right)^n\,{\rm d}t 
					= \int_x^\infty {\rm e}^{-t} \dfrac{t^{a-1}}{\Gamma(a)}\, {}_1F_1 \Big[\begin{array}{c}
	          b \\ c \end{array} \Big| \frac{zt}b \Big]\,{\rm d}t.
   \end{align*}
Now, employing the limit representation \cite{http1} for the confluent hypergeometric function
${}_0F_1$ and the Kummer function ${}_1F_1$, we have
   \[\lim_{b \to \infty} {}_1F_1 \Big[\begin{array}{c} b \\ c \end{array} \Big| \frac{z}b \Big]
	      = {}_0F_1 \Big[\begin{array}{c} - \\ c \end{array} \Big| z \Big]. \]
Therefore, 
   \begin{align*}
      \lim_{b\to\infty} {}_2\Gamma_1 &\Big[ \begin{array}{c} [a, x], b\\ c \end{array}\Big|\, \frac{z}b \Big]
		          = \int_x^\infty  {\rm e}^{-t} \dfrac{t^{a-1}}{\Gamma(a)} \lim_{b \to \infty}
						    {}_1F_1 \Big[\begin{array}{c} b\\ c \end{array} \Big| \frac{zt}b \Big]\,{\rm{d}}t\\
						 &= \int_x^\infty  {\rm e}^{-t} \dfrac{t^{a-1}}{\Gamma(a)}\, {}_0F_1 \Big[\begin{array}{c}
	              - \\ c \end{array} \Big| zt \Big]\,{\rm d}t
						  = \dfrac1{\Gamma(a)} \sum_{n \geq 0} \dfrac{z^n}{(c)_n\,n!}
						    \int_x^\infty{\rm e}^{-t}t^{n+a-1}{\rm{d}}t \\
						 &= \sum_{n \geq 0}\dfrac{\Gamma(a+n,x)}{\Gamma(a)\,(c)_n}\dfrac{z^n}{n!}
						  = \sum_{n \geq 0}\dfrac{[a; x]_n}{(c)_n}\dfrac{z^n}{n!}
              = {}_1\Gamma_1 \Big[ \begin{array}{c} [a, x] \\ c \end{array}\Big|\, z \Big],
   \end{align*}
which completes the proof.
\end{proof}

\begin{remark} Upon setting $x=0$ in {\rm Proposition~\ref{theorem3}} we arrive at the familiar limit
inter--connection representation for the Gaussian and Kummer hypergeometric functions 
{\rm \cite[p. 189]{AAR}}
   \[ \lim_{b\to\infty}{}_2F_1 \Big[\begin{array}{c} a,b \\ c \end{array} \Big| \frac{z}b \Big]
	        = {}_1F_1 \Big[\begin{array}{c} a \\ c \end{array} \Big| z \Big].\]
\end{remark}

\begin{corollary}
The {\rm CDF} of the rv $\xi \sim \chi_\nu'^2(\lambda)$ having $\nu$  degrees of freedom possesses
the representations
   \[ F_{\nu,\lambda}(x) = 1 - {\rm e}^{-\frac\lambda2} {}_1\Gamma_1 \Big[ \begin{array}{c}
	           \big[\tfrac\nu2,\, \tfrac{x}2\big]\\ \tfrac\nu2 \end{array}\Big|\, \frac\lambda2 \Big],\]
and
   \[ F_{\nu,\lambda}(x) = 1 - {\rm e}^{-\frac\lambda2} \lim_{b\to\infty}
	                         {}_2\Gamma_1 \Big[ \begin{array}{c} \big[\tfrac\nu2,\, \tfrac{x}2\big], b  \\
                           \tfrac\nu2 \end{array}\Big|\, \frac\lambda{2 b} \Big].\]
Both formulae are valid for all $\min\{\nu,\lambda,x\}>0$.
\end{corollary}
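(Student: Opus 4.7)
The plan is direct: combine the identity $F_{\nu,\lambda}(x) = 1 - Q_{\nu/2}(\sqrt{\lambda},\sqrt{x})$ from \eqref{distribution} with the two representations of the generalized Marcum $Q$--function already established in this section, namely the Srivastava--Chaudhry--Agarwal formula \eqref{M2} and Lemma~\ref{lemma2}.

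For the first representation I start from \eqref{M2} and perform the parameter match $M \mapsto \nu/2$, $a \mapsto \lambda/2$, together with replacing the variable labeled ``$x$'' inside \eqref{M2} by $x/2$. These choices align $\sqrt{2a}$ with $\sqrt{\lambda}$ and $\sqrt{2x}$ with $\sqrt{x}$, yielding
\[
Q_{\nu/2}\bigl(\sqrt{\lambda},\sqrt{x}\bigr) = {\rm e}^{-\frac{\lambda}{2}}\, {}_1\Gamma_1 \Big[ \begin{array}{c} [\tfrac\nu2,\,\tfrac{x}2] \\ \tfrac\nu2 \end{array}\Big|\, \tfrac\lambda2 \Big].
\]
Inserting this into \eqref{distribution} gives the first asserted formula. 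For the second, I apply the very same substitutions to the identity proved in Lemma~\ref{lemma2}, obtaining
\[
Q_{\nu/2}\bigl(\sqrt{\lambda},\sqrt{x}\bigr) = {\rm e}^{-\frac{\lambda}{2}} \lim_{b\to\infty}\, {}_2\Gamma_1 \Big[ \begin{array}{c} [\tfrac\nu2,\,\tfrac{x}2],\, b \\ \tfrac\nu2 \end{array}\Big|\, \tfrac{\lambda}{2b} \Big],
\]
and a second application of \eqref{distribution} finishes the argument.

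There is no genuine obstacle here: the corollary is a transcription of \eqref{M2} and Lemma~\ref{lemma2} into the CDF language via \eqref{distribution}. The only bookkeeping point is to check that the positivity constraints needed to invoke \eqref{M2} and Lemma~\ref{lemma2} remain satisfied under the chosen substitution, which is immediate from the hypothesis $\min\{\nu,\lambda,x\}>0$; in particular the order parameter $M=\nu/2>0$ and the argument $a=\lambda/2>0$ meet the conditions for the confluence principle employed in the proof of Lemma~\ref{lemma2} to remain valid.
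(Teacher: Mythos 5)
Your proposal is correct and is precisely the argument the paper intends: the corollary is stated without proof as an immediate consequence of substituting $M=\nu/2$, $a=\lambda/2$ and $x\mapsto x/2$ into \eqref{M2} and Lemma~\ref{lemma2}, then invoking $F_{\nu,\lambda}(x)=1-Q_{\nu/2}(\sqrt{\lambda},\sqrt{x})$ from \eqref{distribution}. Your additional check that the positivity constraints survive the substitution is a sensible bit of bookkeeping that the paper leaves implicit.
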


Finally, it is important to observe that using the expression \eqref{usporedba2} derived in
Section~\ref{sec1} the CDF of the rv $\xi \sim \chi_{2n}'^2(\lambda)$, $n\in\mathbb{N}$, can be
presented explicitly in terms of the lower incomplete Gauss hypergeometric function ${}_2\gamma_1$.

\begin{theorem}
For all $n\in\mathbb N$ and $\min\{\lambda,x\}>0$ there holds
   \begin{align*}
      F_{2n,\lambda}(x) &= \dfrac12-{\rm e}^{-\frac{\lambda+x}2}
			                     \Bigg( \frac12 I_0\big(\sqrt{\lambda x}\big)
				          + \sum_{m=1}^{n-1} \Big(\dfrac x\lambda \Big)^{\frac{m}2} I_m(\sqrt{\lambda x})\Bigg)\\
				  &\qquad - \dfrac{|x-\lambda|}{2(\sqrt{x}+\sqrt{\lambda})^2}\,
					          {}_2\gamma_1 \Big[ \begin{array}{c} (1, (\sqrt{x}+\sqrt{\lambda})^2/2),\, \tfrac12  \\
                    1 \end{array}\Big|\, \dfrac{4 \sqrt{\lambda x}}{(\sqrt{\lambda}+\sqrt{x})^2} \Big].
   \end{align*}
\end{theorem}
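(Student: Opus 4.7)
The plan is to invoke Theorem~\ref{theorem1}, which already isolates the difference between the two sides into the single integral $S_0(\sqrt{\lambda x},\omega)$ with $\omega+1=(x+\lambda)(2\sqrt{\lambda x})^{-1}$. It therefore suffices to prove the identity
\[
 S_0(\sqrt{\lambda x},\omega)
   = \dfrac{2\sqrt{\lambda x}}{(\sqrt{x}+\sqrt{\lambda})^2}\,
     {}_2\gamma_1 \Big[ \begin{array}{c} (1,(\sqrt{x}+\sqrt{\lambda})^2/2),\,\tfrac12 \\ 1 \end{array}\Big|\, \dfrac{4\sqrt{\lambda x}}{(\sqrt{x}+\sqrt{\lambda})^2} \Big],
\]
since then multiplication by the prefactor $|\lambda-x|/(4\sqrt{\lambda x})$ from \eqref{usporedba2} produces exactly $|x-\lambda|/(2(\sqrt{x}+\sqrt{\lambda})^2)$ as required.

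The mechanism I would exploit to reach the incomplete Gauss hypergeometric form is the Kummer-type representation of the modified Bessel function,
\[
 I_0(t) = {\rm e}^{-t}\,{}_1F_1\Big[\begin{array}{c} 1/2 \\ 1 \end{array}\Big|\,2t\Big],
\]
which is a direct consequence of the identity $I_\nu(z)=(z/2)^\nu{\rm e}^{-z}/\Gamma(\nu+1)\cdot{}_1F_1(\nu+\tfrac12;2\nu+1;2z)$ at $\nu=0$. Substituting this into \eqref{functionS} absorbs the exponential $\mathrm e^{-t}$ into the one already present in $S_0$, producing the consolidated exponent $(\omega+2)t$ with
\[
 \omega+2 \;=\; \dfrac{x+\lambda}{2\sqrt{\lambda x}}+1 \;=\; \dfrac{(\sqrt{x}+\sqrt{\lambda})^2}{2\sqrt{\lambda x}}\,.
\]
It is this algebraic simplification which explains why the final answer involves $(\sqrt{x}+\sqrt{\lambda})^2/2$ rather than $(x+\lambda)/2$ as in Lemma~1.

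Next I would expand the ${}_1F_1$ in power series, interchange sum and integral (justified by uniform convergence on the bounded interval), and evaluate each integral via the substitution $u=(\omega+2)t$:
\[
 \int_0^{\sqrt{\lambda x}}{\rm e}^{-(\omega+2)t}t^n\,{\rm d}t = \dfrac{\gamma\big(n+1,(\sqrt{x}+\sqrt{\lambda})^2/2\big)}{(\omega+2)^{n+1}}.
\]
Collecting the resulting series, with $2/(\omega+2)=4\sqrt{\lambda x}/(\sqrt{x}+\sqrt{\lambda})^2$ and $1/(\omega+2)=2\sqrt{\lambda x}/(\sqrt{x}+\sqrt{\lambda})^2$, gives
\[
 S_0 = \dfrac{2\sqrt{\lambda x}}{(\sqrt{x}+\sqrt{\lambda})^2}\sum_{n\ge 0}\dfrac{(1/2)_n}{(1)_n}\,\dfrac{\gamma\big(n+1,(\sqrt{x}+\sqrt{\lambda})^2/2\big)}{\Gamma(1)}\,\dfrac{1}{n!}\left(\dfrac{4\sqrt{\lambda x}}{(\sqrt{x}+\sqrt{\lambda})^2}\right)^n,
\]
which is precisely the asserted ${}_2\gamma_1$ series once the incomplete Pochhammer $(1;(\sqrt{x}+\sqrt{\lambda})^2/2)_n$ is read off.

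The main obstacle is purely algebraic: recognizing from the outset that the Kummer identity for $I_0$ (rather than the direct power series used in Lemma~1) is the correct tool, because only the Kummer form generates the shift $(x+\lambda)/2\mapsto(\sqrt{x}+\sqrt{\lambda})^2/2$ that produces an integer-shifted incomplete gamma $\gamma(n+1,\cdot)$ and hence an \emph{ordinary} (not Fox--Wright) incomplete Pochhammer symbol. Once this choice is made the remaining steps are routine termwise integration and identification against the definition of ${}_2\gamma_1$.
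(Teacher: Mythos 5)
Your proposal is correct and follows essentially the same route as the paper: it reduces the claim to Theorem~2 (formula \eqref{usporedba2}), rewrites $I_0(t)={\rm e}^{-t}\,{}_1F_1(\tfrac12;1;2t)$ so that the consolidated exponent $\omega+2=(\sqrt{x}+\sqrt{\lambda})^2/(2\sqrt{\lambda x})$ appears, integrates termwise to produce $\gamma(n+1,(\sqrt{x}+\sqrt{\lambda})^2/2)/(\omega+2)^{n+1}$, and identifies the resulting series with the ${}_2\gamma_1$ definition, with the prefactors combining exactly as you state. No gaps; this matches the paper's own argument step for step.
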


\begin{proof}
The modified Bessel function of the first kind can be expressed in terms of the Kummer
confluent hypergeometric function \cite[p. 328, Eq. {\bf 13.6.9}]{NIST}
   \[I_\nu(z) = \dfrac{{\rm e}^{-z}}{\Gamma(\nu+1)}\left(\dfrac{z}{2}\right)^\nu
	              {}_1F_1 \Big[\begin{array}{c} \nu+\tfrac12 \\ 2\nu+1 \end{array} \Big| 2z \Big], \]
and the definition \eqref{functionS} of $S_\nu$ yields
   \begin{align*}
      S_0(\sqrt{\lambda x},\omega) &= \int_0^{\sqrt{\lambda x}} {\rm e}^{-(\omega+2)t}\,
			      {}_1F_1 \Big[\begin{array}{c} \tfrac12 \\ 1 \end{array} \Big| 2t \Big]\,{\rm d}t
					= \sum_{n \geq 0}\dfrac{(\tfrac12)_n\,2^n}{(1)_n\,n!}
					  \int_0^{\sqrt{\lambda x}}t^n{\rm{e}}^{-(w+2)t}\,{\rm d}t\\
         &= \dfrac1{\omega+2} \sum_{n \geq 0} \dfrac{(\tfrac12)_n}{(1)_n\,n!}
						\left(\dfrac2{\omega+2}\right)^n \int_0^{(\omega+2)\sqrt{\lambda x}}u^n{\rm e}^{-u}\,{\rm d}u\\
				 &= \dfrac1{\omega+2} \sum_{n \geq 0} \gamma\Bigg(1+ n,\frac{(\sqrt{x}+\sqrt{\lambda})^2}{2}\Bigg)
				    \dfrac{(\tfrac12)_n}{(1)_n\,n!} \left(\dfrac2{\omega+2}\right)^n\\
				 &= \dfrac1{\omega+2} \sum_{n \geq 0} \dfrac{\big(1;\tfrac{(\sqrt{x}+\sqrt{\lambda})^2}2\big)_n\,
				    (\tfrac12)_n}{(1)_n\,n!} \left(\dfrac2{\omega+2}\right)^n\\
         &= \dfrac1{\omega+2}\, {}_2\gamma_1 \Big[ \begin{array}{c} \big(1, 
				    \tfrac{(\sqrt{x}+\sqrt{\lambda})^2}2\big), \tfrac12 \\ 1 \end{array}\Big|\, 
						\dfrac2{\omega+2} \Big].
   \end{align*}
Now, by virtue of \eqref{usporedba2} the desired representation follows.
\end{proof}

\section{Discussion. Related results. Further remarks}

In this section we discuss some results stated in the introduction, mostly in order to precisely
describe some remaining cases. 

Also, related novel results which concern the complete and incomplete Fox--Wright functions connection 
are obtained. \medskip

\noindent {\sf A.} In the Section~\ref{sec1} we have listed the Theorem~\ref{pomoc} in condensed and 
more elegant form than it is exposed in the original version, namely \cite[p. 4, Theorem 2.1]{DJM_MJOM}
   \[ F_{2n,\lambda}(x) = {\rm e}^{-\frac{\lambda+x}{2}} \left( \sum_{n \geq 0}
	                        \left(\dfrac x\lambda\right)^{\frac{n}2}\,I_n(\sqrt{\lambda x})
												- \sqrt{\dfrac\lambda x}\,\sum_{m=1}^n\left(\dfrac x\lambda\right)^{\frac{m}2}\,
                          I_{m-1}(\sqrt{\lambda x})\right), \]
where $n\in\mathbb{N}$ and $\min\{\lambda,x\}>0$. \medskip

\noindent {\sf B.} The lower an upper incomplete Fox--Wright functions ${}_p\Psi_q[\cdot |_\gamma 
(x, w)]$, and ${}_p\Psi_q[\cdot |^\Gamma (x, w)]$, say, were introduced by Srivastava and 
Pog\'any \cite[pp. 196--197, Eqs. (6) and (7)]{SriPogany} in a study about the generalized 
Voigt--functions. We mention that Srivastava {\it et al.} \cite{Srivastava} considered not only the 
power series definitions, but also the Mellin--Barnes integral forms of the incomplete Fox--Wright 
$\Psi$ functions as well. Similar definition to \cite[p. 131, Eq. (6.1)]{Srivastava} with the detailed 
discussion of the lower incomplete Fox--Wright function is exploited in \cite{MehrezPoganj}. \medskip 

\noindent {\sf C.} One can notice that Temme obtained the formulae \eqref{CDF_Temme} using
the representations of the generalized Marcum $Q$--function \cite[pp. 57--58, Eqs. (2.6) and (2.8)]{Temme},
being a survival function (consult the relation \eqref{distribution}) which codomain is the unit
interval $[0,1]$ and the fact that his results exclude the case $x=\lambda$. Repeating the derivation
procedure applied in \cite[p. 57]{Temme}, in order to present the Marcum $Q$--function in terms of
the integral $T_\nu$, but in our settings for $x=\lambda$ we conclude that {\it mutatis mutandis}
   \[ Q_\mu(a,a) = \dfrac12\,\int_{2a}^\infty {\rm e}^{-t}\,I_{\mu-1}(t)\,{\rm d}t
	               - \dfrac12\,\int_{2a}^\infty {\rm e}^{-t}\,I_{\mu}(t)\,{\rm d}t, \]
which yields in combination with the formula \eqref{distribution} the extended CDF 
formula \eqref{CDF_Temme}, in the upper case expanded into $x \geq \lambda$. Consequently, the CDF formula 
in terms of the finite $S$--integrals can be written in the modified symmetric form which includes 
both \eqref{CDF_Temme2} and \eqref{CDF=}, reads as follows
  \begin{equation} \label{CDF_Temme3}
      F_{n,\lambda}(x) = \begin{cases}
             \dfrac12\left(\dfrac x\lambda\right)^{\frac{n}4}
				     \Big\{ S_{\frac{n}2-1}(\sqrt{\lambda x},\omega) -
						 \sqrt{\dfrac{\lambda}{x}}\, S_{\frac{n}2}(\sqrt{\lambda x},\omega) \Big\} &x \neq \lambda\\
             \dfrac1{\Gamma(\frac{n}2+1)} \Big(\dfrac\lambda2\Big)^{\frac{n}2}\,
						 \Big\{ {}_2F_2 \Big[ \begin{array}{c} \frac{n-1}2, \frac{n}2\\ \frac{n}2+1, n-1 \end{array}
						 \Big| - 2\lambda \Big] \\
					 \qquad - \dfrac\lambda{n+2}\, {}_2F_2 \Big[ \begin{array}{c} \frac{n+1}2,
						 \frac{n}2+1\\ \frac{n}2+2, n+1 \end{array} \Big| - 2\lambda \Big] \Big\}  &x = \lambda	
			 \end{cases}\,.
   \end{equation} 
Also, it is worth to mention that Brychkov has proved the identities \cite[p. 178, Eq. (5)]{Brychkov}
   \begin{equation} \label{Brychkov1}
	    Q_{n+1}(a,a)=\frac12\big(1+{\rm{e}}^{-a^2}I_0(a^2)\big) + {\rm e}^{-a^2}\sum_{k=1}^n I_k(a^2),
	 \end{equation}
and \cite[p. 178, Eq. (7)]{Brychkov}
   \begin{equation} \label{Brychkov2}
	    Q_{n+\frac12}(a,a) = \frac12\big(1+{\rm{erfc}}(\sqrt{2} a)\big)
                        + {\rm e}^{-a^2} \sum_{k=1}^nI _{k-\frac12}(a^2),
	 \end{equation}
valid for any non-negative integer $n$; these formulae also cower the remaining case $x=\lambda$ in
Temme's article. Both Brychkov's displays are important since $F_{n, \lambda}(x)$ is expressed either
for even indices $\mu = n+1$ associated with \eqref{Brychkov1}, or are related to the half-integer case
\eqref{Brychkov2}. \medskip

\noindent {\sf D.} The question about non--negativity of $F_{n, \lambda}(x)$ in
\eqref{CDF_Temme2} was not discussed in detail in the earlier article \cite{DJM_MJOM}.
The non--negativity of expressions in \eqref{CDF_Temme} are evident, being survival functions.
In turn, skipping this approach we can prove the non--negativity of \eqref{CDF_Temme2} by analytical
tools for any $x>0$, using into account certain bounding inequalities for the ratio of
modified Bessel $I$ functions. To do this, start with the input set of $n$ homoscedastic normal
rvs $X_j \sim \mathscr N(\mu_j, \sigma^2)$, which build the quadratic sum rv $\xi$ (see the
introduction) having non--centrality parameter $\lambda>0$, and denote $\mu = \max_{1 \leq j \leq n}
|\mu_j|$. Then, consider the 'normalized' set of rvs $X_j' = X_j \mu^{-1},\, j=\overline{1, n}$ having
term--wise $\mathscr N(\mu_j \mu^{-1}, \sigma^2 \mu^{-2})$ distributions, respectively. The 'normalized'
non--centrality parameter $\lambda' = \lambda \mu^{-2} \leq n$. However, multiplication
with an absolute constant provides the identical structure of the set of input random variables which
defines $\xi$.

Now, assuming $x> \lambda'$ and rewriting the difference of two $S$--terms in
\eqref{CDF_Temme2} in their integral form, then applying Soni's bound \cite[p. 406, Eq. (A)]{Soni}
   \[ I_{\nu+1}(x)< I_\nu(x), \qquad \nu>-\tfrac12,\, x>0, \]
by setting $\nu = \tfrac{n}2-1$, we deduce the estimate
   \begin{align*}
	    J &= \int_0^{\sqrt{\lambda' x}} {\rm e}^{-(\omega+1)t}\, I_{\frac{n}2-1}(t)
			     \Bigg(1-\sqrt{\dfrac{\lambda'}{x}} \dfrac{I_{\frac{n}2}(t)}{I_{\frac{n}2-1}(t)}\Bigg){\rm d}t\\
			  &\geq \int_0^{\sqrt{\lambda' x}} {\rm e}^{-(\omega+1)t} I_{\frac{n}2-1}(t)
			        \Bigg(1-\sqrt{\dfrac{\lambda'}{x}}\, \Bigg){\rm d}t.
	 \end{align*}
Hence, the integral $J$ is obviously positive.

On the other hand, the case $x<\lambda'$ we handle in a similar way, but now with
the aid of the simple functional bound by Joshi and Bissu \cite[p. 255]{Joshi}
   \[ \dfrac{I_{\nu+1}(x)}{I_\nu(x)}< \dfrac{x}{2(\nu+1)}, \qquad \nu>-1,\, x>0.\]
This results in
   \begin{align*}
	    J	\geq \int_0^{\sqrt{\lambda' x}} {\rm e}^{-(\omega+1)t}\, I_{\frac{n}2-1}(t)\,
			         \Big(1-\sqrt{\dfrac{\lambda'}{x}} \dfrac{t}{n}\Big) {\rm d}t
					\geq \Big(1-\dfrac{\lambda'}{n}\Big)\, S_{\frac{n}2-1}\big(\sqrt{\lambda' x}, \omega\big)\,;
	 \end{align*}
the integral $J$ is non--negative as $\lambda' \leq n$. Finally, we point out that the case $x=\lambda$ 
is self-explanatory, compare \eqref{CDF_Temme3}. \medskip 

\section*{Acknowledgments}
The authors are immensely grateful to Professor Nico Temme for his careful reading, constructive
suggestions and helpful comments on an earlier version of the manuscript. His remarks significantly 
contribute to encompass the final version. The research of TKP was partially supported by 
the University of Rijeka, Croatia; project codes {\tt uniri-pr-prirod-19-16} and {\tt uniri-tehnic-18-66}.

\end{document}